\definecolor{vert}{rgb}{0,0.6,0}
\definecolor{vert}{rgb}{0,0.6,0}
\numberwithin{figure}{section}
\theoremstyle{plain}
\newtheorem{thm}{Theorem}[section]
\newtheorem{defn}{Definition}
\newtheorem{quest}{Question}
\newtheorem{conj}{Conjecture}
\newtheorem{lem}[thm]{Lemma}
\newtheorem{cor}[thm]{Corollary}
\newtheorem{prop}[thm]{Proposition}
\theoremstyle{remark}
\newtheorem{rem}{\bf{Remark}}
\numberwithin{equation}{section}
\newcommand{\N}{\mathbb{N}}
\newcommand{\R}{\mathbb{R}}
\newcommand{\T}{\mathbb{T}}
\newcommand{\Z}{\mathbb{Z}}
\newcommand{\BUC}{{\rm BUC\,}}
\newcommand{\ep}{\varepsilon}
\newcommand{\lam}{\lambda}
\newcommand{\ol}{\overline}
\begin{document}
\title[Inverse problems in homogenization]{A rigidity result for effective Hamiltonians with $3$-mode periodic potentials}

\author{Hung V. Tran}
\address[Hung V. Tran]
{
Department of Mathematics, 
University of Wisconsin Madison, Van Vleck hall, 480 Lincoln drive, Madison, WI 53706, USA}
\email{hung@math.wisc.edu}

\author{Yifeng Yu}
\address[Yifeng Yu]
{
Department of Mathematics, 
University of California, Irvine, 410G Rowland Hall, Irvine, CA 92697, USA}
\email{yyu1@math.uci.edu}

\thanks{
The work of HT is partially supported by NSF grants DMS-1615944 and  DMS-1664424,
the work of YY is partially supported by NSF CAREER award \#1151919.
}

\date{}

\keywords{Cell problems; effective Hamiltonians; inverse problem; periodic homogenization; rigidity result; trigonometric polynomials; viscosity solutions}
\subjclass[2010]{
35B10 
35B20 
35B27 
35D40 
35F21 
42A16 
}

\maketitle 

\begin{abstract}
We continue studying an inverse problem in the theory of periodic homogenization of Hamilton-Jacobi equations proposed in \cite{LTY}.
Let $V_1, V_2 \in C(\R^n)$ be two given potentials which are $\Z^n$-periodic,
and $\ol{H}_1, \ol{H}_2$ be the effective Hamiltonians associated with the Hamiltonians $\frac{1}{2}|p|^2 + V_1$, $\frac{1}{2}|p|^2+V_2$, respectively.

A main result in this paper is that,  if the dimension $n=2$ and each of $V_1, V_2$ contains exactly $3$ mutually non-parallel Fourier modes, then
$$
\overline H_1\equiv \overline H_2   \quad \iff \quad V_1(x)=V_2\left({x\over c}+x_0\right) \quad \text{ for all } x \in \T^2 = \R^2/\Z^2,
$$
for some $c\in \mathbb{Q} \setminus\{0\}$ and $x_0 \in \T^2$.
When $n\geq 3$, the scenario is slightly more subtle, and a complete description is  provided for any dimension.  
These resolve partially the conjecture stated in \cite{LTY}.
Some other related results and open problems are also discussed.

\end{abstract}

\section{Introduction}

\subsection{Periodic homogenization and the inverse problem}
We first describe the theory of periodic homogenization of Hamilton-Jacobi equations.
For each length scale $\ep>0$, let $u^\ep \in C(\R^n \times [0,\infty))$ be the viscosity solution to
\begin{equation}\label{HJ-ep}
\begin{cases}
u^\ep_t + H(Du^\ep) +V\left(\frac{x}{\ep}\right)=0 \quad &\text{ in } \R^n \times (0,\infty),\\
u^\ep(x,0)=g(x) \quad &\text{ on } \R^n.
\end{cases}
\end{equation}
Here, the Hamiltonian $H(p) -V(x)$ is of separable form with $H \in C(\R^n)$, which is coercive (i.e., $\lim_{|p| \to \infty} H(p)=+\infty$),
and $V \in C(\R^n)$, which is $\Z^n$-periodic.
The initial data $g\in \BUC(\R^n)$, the set of bounded, uniformly continuous functions on $\R^n$.

It was shown in \cite{LPV} that, in the limit as the length scale $\ep$ tends to zero,
 $u^\ep$ converges to $u$ locally uniformly on $\R^n \times [0,\infty)$,
and $u$ solves the effective equation
\begin{equation}\label{HJ-hom}
\begin{cases}
u_t +\ol{H}(Du)=0 \quad &\text{ in } \R^n \times (0,\infty),\\
u(x,0)=g(x) \quad &\text{ on } \R^n.
\end{cases}
\end{equation}
The effective Hamiltonian $\ol{H} \in C(\R^n)$ is determined in a nonlinear way by $H$ and $V$ through the cell problems as following.  
For each $p \in \R^n$, it was derived in \cite{LPV} that there exists a unique constant $c \in \R$ such that the following cell problem has a continuous viscosity solution
\begin{equation} \label{E-p}
H(p+Dv) + V(x) = c \quad \text{ in } \T^n,
\end{equation}
where $\T^n$ is the $n$-dimensional flat torus $\R^n/\Z^n$.
We then denote by $\ol{H}(p):=c$.

During past decades,  there have been tremendous progress and vast literature about the validity of homogenization and the well-posedness of cell problems in various generalized settings.  Nevertheless,  understanding theoretically how $\overline H$ depends on the potential $V$ remains a very challenging and still largely open problem even for the most basic  case $H(p)={1\over 2}|p|^2$.   
For a smooth periodic potential $V$,  a deep result  in \cite{B2} asserts that when $n=2$, each non-minimum level curve of $\overline H$ associated with ${1\over 2}|p|^2-V$ must contain line segments unless $V$ is constant. Its proof relies on delicate analysis based on detailed structure of Aubry-Mather sets in two dimensions and a rigidity result in Riemannian geometry (the Hopf conjecture).  Besides, due to the highly nonlinear nature of the problem,  efficient numerical schemes to compute $\ol{H}$ have yet to be found. 
We refer to \cite{ATY1, ATY2, B1, Con1, Con2, CIPP, E, EvG, Fa, Gao, QTY} and the references therein for recent progress.

In this paper,  we aim to investigate the relation between $V$ and $\ol{H}$ from the perspective of the following inverse problem first formulated in \cite{LTY}.

\begin{quest}\label{q1}
Let $H\in C(\R^n)$ be a given coercive function, that is, $\lim_{|p| \to \infty} H(p) = +\infty$.
Let $V_1, V_2 \in C(\R^n)$ be two given potential energy functions which are $\Z^n$-periodic. 
Let $\ol{H}_1, \ol{H}_2$ be the effective Hamiltonians corresponding to the Hamiltonians $H(p)+V_1(x)$, $H(p)+V_2(x)$, respectively.
If 
\[
\ol{H}_1 \equiv \ol{H}_2,
\]
then what can we conclude about the relations between $V_1$ and $V_2$?

\end{quest}

When $n=1$,  a complete answer was provided in \cite{LTY} for a general class of convex $H$. It was shown that
\[
\text{$\ol{H}_1 \equiv \ol{H}_2$ \quad  $\iff$  \quad $V_1$ and $V_2$ have same distributions,}
\]
that is, $\displaystyle \int_{0}^1 f(V_{1}(x))\,dx=\int_{0}^1 f(V_{2}(x))\,dx$ for all $f \in C(\R)$. 

\smallskip

In case $n\geq 2$,  the only known $\overline H$-invariant transformations are translation and scaling, i.e., for some $c\in \mathbb Q \cap (0, +\infty)$ and $x_0 \in \T^n$,
$$
\quad V_1(x)=V_2\left({x\over c}+x_0\right)  \quad \text{for all } x\in \T^n \quad \Longrightarrow \quad \ol{H}_1 \equiv \ol{H}_2.
$$
If $H$ is convex and even,  the rescaling factor  $c$ could also be negative.  However,  If $H$ is even but nonconvex,  $c$ has to be  positive due to some pathological phenomena associated with nonconvexity (loss of evenness \cite{QTY}).  It is natural to investigate the following converse question. 
Throughout this paper,  we  focus on the mechanical Hamiltonian case, that is, the case where $H(p)={1\over 2}|p|^2$  for $p\in \R^n$.
\begin{quest}\label{q2}
Assume that $n\geq 2$ and $H(p)=\frac{1}{2}|p|^2$ for $p\in \R^n$.
Let $V_1, V_2 \in C(\R^n)$ be two given potential energy functions which are $\Z^n$-periodic. 
Let $\ol{H}_1, \ol{H}_2$ be the effective Hamiltonians corresponding to the Hamiltonians $H(p)-V_1(x)$, $H(p)-V_2(x)$, respectively.
If 
\[
\ol{H}_1 \equiv \ol{H}_2,
\]
then can we conclude that 
$$
\quad V_1(x)=V_2\left({x\over c}+x_0\right) \quad \text{for all } x\in \T^n,
$$
for some $c\in \mathbb{Q} \setminus\{0\}$ and $x_0\in \T^n$?

\end{quest}

Some results related to this question were established in \cite{LTY}. 
For example, if $V_1$ is constant, then the conclusion of Question \ref{q2} holds, that is, $V_2$ must  be the same constant (see \cite[Theorem 1.1]{LTY}).
In the general setting where $V_1, V_2 \in C^\infty(\T^n)$, by \cite[Theorem 1.2]{LTY}, $\ol{H}_1=\ol{H}_2$ implies that
\[
\int_{\Bbb T^n}V_1\,dx=\int_{\Bbb T^n}V_2\,dx,
\]
and under an extra decay condition of the Fourier coefficients of $V_1, V_2$, we also have  
\[
\int_{\Bbb T^n}V_{1}^{2}\,dx=\int_{\Bbb T^n}V_{2}^{2}\,dx.
\]
It was conjectured in \cite[Remark 1.1]{LTY} that under the settings of Question \ref{q2} and some further reasonable assumptions on $V_1, V_2$, 
if $\ol{H}_1=\ol{H}_2$,
then $V_1$ and $V_2$ have the same distribution. 
Clearly, this conjecture is weaker than the conclusion of Question \ref{q2}.
We address more about this point at the end of Subsection \ref{subsec:result}.

\medskip

It is natural to study the above questions in the case that $V_1$ and $V_2$ are trigonometric polynomials with $m$ mutually non-parallel Fourier modes.  
In this paper, as a preliminary step, we settle  Question \ref{q2}  when number of modes $m=3$.  
When $m\leq 2$, the analysis is much simpler.  
We give the main results in the following subsection.

\subsection{Main results} \label{subsec:result}

For $l=1,2$, set
\[
{\rm (A)} \quad
\begin{cases}
V_l(x)= a_{l0}+ \sum_{j=1}^m( \lam_{lj} e^{i 2 \pi k_{lj} \cdot x} + \overline{\lam_{lj}}e^{-i 2\pi k_{lj} \cdot x}),\\
\text{where $a_{l0} \in \R$, $\{\lam_{lj}\}_{j=1}^m \subset \mathbb{C}$ and $\{k_{lj}\}_{j=1}^m \subset \Z^n \setminus \{0\}$
such that}\\
\text{each pair of the $m$ vectors $\{k_{lj}\}_{j=1}^m$ are not parallel.}
\end{cases}
\]
Here, $ \overline{\lam_{1j}}$ is the complex conjugate of $\lam_{1j}$ for $1\leq j\leq m$. 
The following are our main results. 

\begin{thm}\label{thm:2d}
Assume that $m=3$, $n=2$, $H(p)=\frac{1}{2}|p|^2$ for all $p\in \R^2$, and {\rm (A)} holds.  Assume that 
\[
\ol{H}_1(p) = \ol{H}_2 (p) \quad \text{ for all $p\in  \R^2$}.
\]
Then there exist $c\in  \mathbb{Q} \setminus \{0\}$ and $x_0\in \mathbb{T}^2$ such that
\[
V_1(x)=V_2\left({x\over c}+x_0\right) \quad \text{ for all } x\in \T^2.
\]
\end{thm}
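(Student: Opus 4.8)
\emph{Proof idea.} The plan is to recover the Fourier data of $V_l$ --- modes, magnitudes, and phases --- from the large-$|p|$ asymptotics of $\ol H_l$, up to the symmetries in the statement. First, reductions: if some $\lam_{lj}=0$ then $V_l$ has at most two non-parallel modes and the (simpler) case $m\le2$ applies, so assume $\lam_{lj}\ne0$ throughout; replacing $k_{lj}$ by $-k_{lj}$ and $\lam_{lj}$ by $\ol{\lam_{lj}}$ leaves $V_l$ unchanged, and replacing $V_2(x)$ by $V_2(-x)$, or by $V_2(x/c)$ with $c\in\mathbb{Q}\setminus\{0\}$ making the result $\Z^2$-periodic, leaves $\ol H_2$ unchanged ($H(p)=\tfrac12|p|^2$ is even, and dilating a periodic potential only rescales the homogenization length). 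These are exactly the freedoms $(c,x_0)$ in the statement.

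\smallskip\noindent\emph{Asymptotic expansion at infinity.} Fix a truncation order $N$. For a unit vector $\theta\in\R^2$ not perpendicular to any of the finitely many $\Z$-combinations of $k_{l1},k_{l2},k_{l3}$ appearing below, construct approximate correctors $v=\tfrac1s\sum_{i=0}^{N}s^{-2i}w_i(\cdot\,;\theta)$ for the cell problem $\tfrac12|s\theta+Dv|^2-V_l=\ol H_l(s\theta)$ on $\T^2$; since $V_l$ is a trigonometric polynomial each $w_i$ is again one (no small-divisor obstruction), and the equation's residual is $O_N(s^{-2N-2})$ uniformly in $s\ge1$, so by stability of effective Hamiltonians
\[
\ol H_l(s\theta)=\tfrac12 s^2-a_{l0}+\sum_{i=1}^{N}\frac{E_i^{(l)}(\theta)}{s^{2i}}+O_N\big(s^{-2N-2}\big),\qquad s\to+\infty,
\]
with $E_1^{(l)}(\theta)=\sum_{j=1}^{3}|k_{lj}|^2|\lam_{lj}|^2/(\theta\cdot k_{lj})^2$ and each $E_i^{(l)}$ a rational function of $\theta$ explicitly computable from the Fourier data of $V_l$. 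Three features are used: $E_1^{(l)}$ has poles exactly along the three lines $\{\theta\cdot k_{lj}=0\}$; $E_2^{(l)}\equiv0$ unless the modes obey a $\pm1$-relation $\pm k_{l1}\pm k_{l2}\pm k_{l3}=0$; and the pole data of the $E_i^{(l)}$ --- which lines $\{\theta\cdot m=0\}$, for $m$ a $\Z$-combination of the modes, occur and with what order --- encode the $\Z$-linear relations among $k_{l1},k_{l2},k_{l3}$ (exactly one such relation always exists, three pairwise non-parallel vectors in $\R^2$ being linearly dependent --- this is where $n=2$, $m=3$ enter). The residue along the ``first resonant'' pole is a cubic expression in the $\lam_{lj}$ and their conjugates, sensitive to $\sum_j\beta_j\arg\lam_{lj}$ up to sign, where $(\beta_1,\beta_2,\beta_3)$ is the primitive relation vector.

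\smallskip\noindent\emph{Matching.} From $\ol H_1\equiv\ol H_2$ and uniqueness of asymptotic expansions: $a_{10}=a_{20}$ and $E_i^{(1)}\equiv E_i^{(2)}$ for all $i$. Comparing the three (distinct) polar lines of $E_1$ and their residues forces, after relabelling and sign flips, $k_{lj}=d_{lj}\hat k_j$ with a common primitive $\hat k_j\in\Z^2$ and $d_{lj}\in\Z_{>0}$, and $|\lam_{1j}|=|\lam_{2j}|$, $j=1,2,3$. With $b_1\hat k_1+b_2\hat k_2+b_3\hat k_3=0$ ($b_j$ coprime, all nonzero), the primitive relation among $k_{l1},k_{l2},k_{l3}$ has coefficient vector $\propto(b_1/d_{l1},b_2/d_{l2},b_3/d_{l3})$; matching the pole data of the $E_i$ makes this relation the same (up to sign) for $l=1,2$, so $d_{2i}/d_{1i}=d_{2j}/d_{1j}$ for all $i,j$, i.e.\ $(d_{21},d_{22},d_{23})=c\,(d_{11},d_{12},d_{13})$ for a single $c\in\mathbb{Q}_{>0}$. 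Replacing $V_2$ by $V_2(x/c)$ (which is $\Z^2$-periodic with modes $k_{2j}/c=k_{1j}$) we may assume $k_{1j}=k_{2j}=:k_j$. Finally the phase-sensitive residue gives $\cos\big(\sum_j\beta_j\arg\lam_{1j}\big)=\cos\big(\sum_j\beta_j\arg\lam_{2j}\big)$, which with $|\lam_{1j}|=|\lam_{2j}|$ and $\sum_j\beta_j k_j=0$ is exactly the solvability condition in $x_0\in\T^2$ of $2\pi\,k_j\cdot x_0\equiv\arg\lam_{1j}-\arg\lam_{2j}\pmod{2\pi}$, $j=1,2,3$ (the ``$-$'' case of the cosine identity being absorbed into the reflection freedom). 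Then $\lam_{1j}=\lam_{2j}e^{2\pi i k_j\cdot x_0}$, so $V_1(x)=V_2(x+x_0)$; undoing the dilation gives $V_1(x)=V_2(x/c+x_0)$.

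\smallskip\noindent\emph{Main obstacle.} The hard part is the last two paragraphs: turning the formal corrector series into a genuine asymptotic expansion with the stated uniform error, and --- decisively --- reading off from the poles of the $E_i(\theta)$ both the $\Z$-linear relation among the three modes and the cubic phase datum, robustly enough to run the comparison. The bookkeeping stays finite only because there are exactly three modes and a single relation among them; I expect a split into the cases ``the modes satisfy a $\pm1$-relation'' (where $E_2$ already suffices) and ``they do not'' (where one goes to higher order, or instead analyses $\ol H_l(s\theta)$ along the resonant directions $\theta=k_j^\perp/|k_j|$, where homogenization reduces to a one-dimensional averaged problem coupled to the remaining two modes).
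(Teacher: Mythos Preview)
Your program matches the paper's up to the point where the directions $k_{lj}/|k_{lj}|$ and the magnitudes $|\lam_{lj}|$ are recovered from the $E_1$ term; that step is identical to the paper's Lemma~\ref{lem:2d-1}.

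For the common scaling factor $c$, your argument (``matching the pole data of the $E_i$ makes the primitive relation among the modes the same for $l=1,2$'') is pointing at the right phenomenon but skips the actual work. The paper isolates, from the $a_4$ coefficient, the \emph{sole} singular directions $\pm k_{lj_1}\pm k_{lj_2}$ (those occurring exactly once), and then proves a self-contained plane-geometry lemma (Lemma~\ref{lem:2d-2}): if the sole vectors of $S_1=\{\pm u_i,\ \pm u_i\pm u_j\}$ are each parallel to some vector of $S_2=\{\pm u_i,\ \pm \alpha_i u_i\pm \alpha_j u_j\}$ and vice versa, then $\alpha_1=\alpha_2=\alpha_3$. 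The proof is a fairly intricate case analysis (orthogonal vs.\ non-orthogonal modes; several $2\times2$ sub-cases using cross products). Your one-line ``pole data determines the relation vector'' does not substitute for this; in particular you have not explained why higher poles or cancellations in $E_i$ cannot confound the reading.

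The genuine gap is the phase step. You assert that a ``phase-sensitive residue'' in the expansion yields $\cos\big(\sum_j\beta_j\arg\lam_{1j}\big)=\cos\big(\sum_j\beta_j\arg\lam_{2j}\big)$. The authors considered exactly this route and record (Remark~\ref{rem:higher-coeff}) that the first expansion coefficient carrying phase information has the form
\[
a_L=P(r_j,k_j,Q)+J(k_1,k_2,k_3,Q)\,\mathrm{Re}\big(\lam_{11}^{m_1}\,\ol{\lam_{12}}^{\,m_2}\lam_{13}^{m_3}\big),
\]
with $L=m_1+m_2+m_3$ from $m_2k_2=m_1k_1+m_3k_3$, and that they were \emph{unable to verify} $J\not\equiv0$ because its expression is too complicated. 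Your proposal simply assumes this non-vanishing (packaged as a nonzero residue), so as written it does not close. The paper avoids the issue entirely: it uses the separate identity $\ol H_l(0)=\max_{\T^2}V_l$, reduces to matching
\[
M(t)=\max_{\theta_1,\theta_2}\{r_1\cos\theta_1+r_2\cos\theta_2+r_3\cos(\alpha_1\theta_1+\alpha_2\theta_2+t)\}
\]
at two values of $t$, and proves that $M$ has no non-global local maxima, hence is strictly monotone on a half-period; this pins down the phase combination up to the sign/reflection ambiguity. If you want to salvage your approach, you must prove $J\not\equiv0$ (or find some other expansion datum with a demonstrably nonzero phase coefficient); otherwise, replace the phase step by the $\ol H(0)=\max V$ argument.
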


\begin{thm}\label{thm:3d} Assume that $m=3$, $n\geq 3$, $H(p)=\frac{1}{2}|p|^2$ for all $p\in \R^n$, and {\rm (A)} holds. 
There are three cases as following.

{\rm (1)}  If $\{k_{1j}\}_{j=1}^3$ are mutually orthogonal, then $\ol{H}_1= \ol{H}_2$ if and only if for $1\leq j\leq 3$,
\[
k_{1j}\parallel k_{2j}   \quad \mathrm{and} \quad |\lam_{1j}|=|\lambda_{2j}|.
\]

{\rm (2)} If $k_{11}\perp k_{12}$ and $k_{11}\perp k_{13}$, but $k_{12} \not \perp k_{13}$, then  $\ol{H}_1= \ol{H}_2$ if and only if
\[
k_{11}\parallel k_{21}, \quad c k_{12}= k_{22},  \ c  k_{13}=k_{23}  \quad \text{for some $c\in \mathbb{Q} \setminus \{0\}$,}
\]
and  for $1\leq j\leq 3$,
\[
 |\lam_{1j}|=|\lambda_{2j}|.
\]

{\rm (3)} If $\{k_{1j}\}_{j=1}^3$ do not satisfy {\rm (1)} and {\rm (2)} after permutations, then 
\[
\ol H_1\equiv \ol H_2  \quad \iff \quad V_1(x)=V_2\left({x\over c}+x_0\right) \quad \text{ for all } x \in \T^n,
\]
for some $c\in  \mathbb{Q} \setminus \{0\}$ and $x_0\in \mathbb{T}^n$.

\end{thm}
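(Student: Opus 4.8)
\textbf{The plan} is to combine three ingredients: a large--momentum expansion of $\overline H$, an orthogonal--splitting lemma for mechanical Hamiltonians, and --- for the genuinely two--dimensional part --- Theorem~\ref{thm:2d}. First, since $\tfrac12|p|^{2}-V_l$ is uniformly positive for $|p|$ large, a standard perturbative argument shows that the cell problem \eqref{E-p} has a smooth solution along every direction $e=p/|p|$ with $e\cdot k_{lj}\ne 0$ for $j=1,2,3$, and that
\[
\overline H_l(p)=\tfrac12|p|^{2}-a_{l0}+\frac{1}{|p|^{2}}\sum_{j=1}^{3}\frac{|k_{lj}|^{2}\,|\lambda_{lj}|^{2}}{(e\cdot k_{lj})^{2}}+O\big(|p|^{-4}\big).
\]
Reading the $|p|^{-2}$--coefficient as a rational function of $e$, its polar locus is $\bigcup_j k_{lj}^{\perp}$, and its singular part along $k_{lj}^{\perp}$ is $|\lambda_{lj}|^{2}>0$ times the inverse square of the normal coordinate. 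Hence $\overline H_1\equiv\overline H_2$ forces $a_{10}=a_{20}$ and, after relabelling, $k_{1j}\parallel k_{2j}$ and $|\lambda_{1j}|=|\lambda_{2j}|$ for $j=1,2,3$; in particular the mutual orthogonality patterns of $\{k_{1j}\}$ and $\{k_{2j}\}$ coincide. This proves the ``only if'' part of~(1) outright, and reduces~(2) and~(3) to determining the rational numbers $t_j$ with $k_{2j}=t_jk_{1j}$ and the residual phases $\arg\lambda_{1j}-\arg\lambda_{2j}$.

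Second, for the mechanical Hamiltonian, if $V$ splits as a sum of potentials each depending only on the variables of one of the mutually orthogonal rational subspaces $E_1,\dots,E_r$, then $\overline H(p)=\sum_i\overline H^{(i)}(\Pi_{E_i}p)+\tfrac12|\Pi_{W}p|^{2}$ with $W=(E_1\oplus\cdots\oplus E_r)^{\perp}$, where $\overline H^{(i)}$ is the effective Hamiltonian of the $i$-th block: ``$\le$'' comes from the inf--sup formula $\overline H(p)=\inf_{\phi\in C^{1}(\T^n)}\max_x\big(\tfrac12|p+D\phi|^{2}-V\big)$ applied to test functions that are sums of functions of the separate groups of variables, and ``$\ge$'' from the variational (weak KAM / Mather) formula for $\overline H$ applied to product invariant measures. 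Taking three orthogonal lines gives the ``if'' direction of~(1), because a single--mode one--dimensional potential has effective Hamiltonian depending only on its amplitude. In case~(2), the first step makes $V_2$ share the pattern $k_{21}\parallel k_{11}$, $\mathrm{span}(k_{22},k_{23})=\mathrm{span}(k_{12},k_{13})=:P$, $|\lambda_{1j}|=|\lambda_{2j}|$; the splitting lemma then gives $\overline H_l(p)=\overline h(\Pi_{\R k_{11}}p)+g_l(\Pi_{P}p)+\tfrac12|\Pi_{W}p|^{2}$ with the same one--mode factor $\overline h$ and the same decomposition of $\R^n$, so $\overline H_1\equiv\overline H_2$ forces $g_1\equiv g_2$. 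The case $m\le 2$ (much simpler: the effective Hamiltonian of a two--mode potential on a $2$-torus determines its two modes up to a common rational rescaling and its two amplitudes) then yields $ck_{12}=k_{22}$, $ck_{13}=k_{23}$ for a single $c\in\mathbb{Q}\setminus\{0\}$. The ``if'' direction reverses this, using that a common rational rescaling of the $P$--block leaves $g$ unchanged and that the two independent phase shifts in $P$ are realized by one translation since $k_{12},k_{13}$ are linearly independent.

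Third, consider case~(3). If $P_0:=\mathrm{span}(k_{11},k_{12},k_{13})$ is two--dimensional, then by the first step $V_1$ and $V_2$ have all their modes in $P_0$, so the splitting lemma (one block plus free directions) gives $\overline H_l(p)=\overline H_l^{(2)}(\Pi_{P_0}p)+\tfrac12|\Pi_{P_0^{\perp}}p|^{2}$ for the same splitting, hence $\overline H_1^{(2)}\equiv\overline H_2^{(2)}$. Since three pairwise non--parallel modes in a plane automatically fall under case~(3), Theorem~\ref{thm:2d} --- whose proof applies verbatim on the flat $2$-torus $P_0/(P_0\cap\Z^n)$, not merely on $\R^2/\Z^2$ --- yields $V_1(x)=V_2(x/c+x_0)$, which lifts back to $\T^n$. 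The ``if'' direction of~(3) is simply the invariance of $\overline H$ under $V\mapsto V(\cdot/c+x_0)$ recorded before Question~\ref{q2}.

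\textbf{The main obstacle} is the remaining subcase of~(3), where $k_{11},k_{12},k_{13}$ span a three--dimensional space. It cannot be reduced to Theorem~\ref{thm:2d} by a $GL(n,\Z)$ change of variables, since such a change destroys both the mechanical form $\tfrac12|p|^{2}$ and the orthogonality relations; and the large--momentum expansion gives nothing further, as all its coefficients are even in the data and hence insensitive to the ratios $t_j$ and to the phases. The missing information must therefore be extracted from the non--smooth, ``flat'' part of $\overline H$ --- the structure of the sublevel sets $\{\overline H_l\le c\}$ near their common minimum and of the associated Aubry--Mather sets, in the spirit of \cite{B2} --- by showing that a mismatch $t_i\ne t_j$, or an incompatible family of phases, separates $\overline H_1$ from $\overline H_2$. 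A natural route is to combine the splitting idea with Theorem~\ref{thm:2d}: restrict $\overline H_l$ to well--chosen rational $2$-planes on which one pair of modes is dominant, compare the induced two--dimensional behaviour pair by pair, and bootstrap to the full claim; carrying this out while controlling the simultaneous interaction of all three modes is the crux.
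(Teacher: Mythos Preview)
Your treatment of cases~(1), (2), and of the linearly \emph{dependent} subcase of~(3) is essentially the paper's: the $|p|^{-2}$ coefficient of the large--$p$ expansion gives Lemma~\ref{lem:2d-1}, and the orthogonal--splitting fact is exactly Lemma~\ref{decomposition}. Your route to the ``only if'' of~(2) via the splitting lemma plus the two--mode result is slightly less direct than the paper's (which reads the constraint $k_{12}+k_{13}\parallel k_{22}+k_{23}$ straight off the $a_4$ term), but it is correct.

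The genuine gap is in the linearly \emph{independent} subcase of~(3), which you declare the ``main obstacle'' and do not prove. Your diagnosis that ``the large--momentum expansion gives nothing further, as all its coefficients are even in the data and hence insensitive to the ratios $t_j$'' is simply false, and it is the reason you miss a short proof. Push the expansion one more order: the coefficient $a_4$ contains, for every non--orthogonal pair $i\ne j$, a term with a pole along $\{(k_{1i}+k_{1j})\cdot Q=0\}$ (see \eqref{v12} and the discussion of $a_4$). The \emph{direction} of $k_{1i}+k_{1j}$ depends on the ratio $|k_{1i}|/|k_{1j}|$, not just on the lines $\R k_{1i}$ and $\R k_{1j}$; hence $a_4$ is \emph{not} insensitive to the $t_j$'s. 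When the three modes are linearly independent, each such sum is a sole vector in the sense of Remark~\ref{singular}, so $\overline H_1\equiv\overline H_2$ forces $k_{1i}+k_{1j}\parallel k_{2i}+k_{2j}$. In case~(3) one may assume (after permutation) that $k_{11}$ is orthogonal to neither $k_{12}$ nor $k_{13}$; the two parallelisms $k_{11}+k_{12}\parallel k_{21}+k_{22}$ and $k_{11}+k_{13}\parallel k_{21}+k_{23}$, combined with $k_{1j}\parallel k_{2j}$, immediately give a \emph{common} $c$ with $k_{2j}=ck_{1j}$ for all $j$. Finally, because $k_{11},k_{12},k_{13}$ are linearly independent, the three phase conditions $k_{1j}\cdot x_0\equiv(\arg\lambda_{2j}-\arg\lambda_{1j})/2\pi$ can be solved for $x_0$, yielding $V_1(x)=V_2(x/c+x_0)$. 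No Aubry--Mather theory, no restriction to $2$--planes, and no analysis of flat pieces is needed; in fact this subcase is \emph{easier} than the two--dimensional situation, since linear independence both makes the sole--vector argument clean and removes the phase obstruction that forces the use of \eqref{min-V} and Corollary~\ref{key-fact} in the proof of Theorem~\ref{thm:2d}.
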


\begin{rem}   
Theorem \ref{thm:2d} can actually be viewed as a special case of (3) in Theorem \ref{thm:3d}. 
Nevertheless, the major part of this paper is devoted to  proving  this two dimensional result, and hence, it is worth stating it as a separate theorem.

\end{rem}

For completeness,  we also present the case when $m\leq 2$. 

\begin{thm}\label{thm:3d-2}  Assume that $m\leq 2$,  $H(p)=\frac{1}{2}|p|^2$ for all $p\in \R^n$, and {\rm (A)} holds. Then

{\rm (1)} If $m=1$, then
$$
\ol H_1\equiv \ol H_2  \quad \iff \quad V_1(x)=V_2\left({x\over c}+x_0\right) \quad \text{for all } x\in \T^n,
$$
for some $c\in  \mathbb{Q} \setminus \{0\}$ and $x_0\in \mathbb{T}^n$.

{\rm (2)} If $m=2$, then there are two cases.
\begin{itemize}
\item[(i)] If $k_{11}\perp k_{12}$, then then $\ol H_1\equiv \ol H_2$ if and only if  for $j=1,2$,
$$
k_{1j}\parallel k_{2j}   \quad \mathrm{and} \quad |\lam_{1j}|=|\lambda_{2j}|.
$$

\item[(ii)] If $k_{11}$ is not perpendicular to $k_{12}$, then 
$$
\ol H_1\equiv \ol H_2  \quad \iff \quad V_1(x)=V_2\left({x\over c}+x_0\right) \quad \text{for all } x\in \T^n,
$$
for some $c\in  \mathbb{Q} \setminus \{0\}$ and $x_0\in \mathbb{T}^n$.
\end{itemize}
\end{thm}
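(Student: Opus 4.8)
The plan is to treat the three regimes separately, using that the cases $m=1$ and $m=2$(i) are ``soft'' — the cell problem separates into one-dimensional problems and one invokes the one-dimensional inverse result of \cite{LTY} — while $m=2$(ii) is the ``rigid'' case, where all the difficulty is concentrated. In every case I would first reduce to the situation that each amplitude $|\lambda_{lj}|$ is nonzero (otherwise $V_l$ has fewer than $m$ modes and one drops to a lower case; if all amplitudes vanish $V_l$ is constant and the conclusion is \cite[Theorem~1.1]{LTY}), and normalize $a_{10}=a_{20}=0$ (legitimate, since $a_{l0}=\int_{\T^n}V_l$ is recovered from $\ol H_l$ by \cite[Theorem~1.2]{LTY}). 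I would also use repeatedly that $\ol H_l$ is intrinsic to $V_l$ as a function on $\R^n$, being the homogenized Hamiltonian ($=\lim u^\ep$), so that it does not depend on the choice of periodicity lattice; this permits splitting the cell problem across a subspace and its orthogonal complement even when the lattice $\Z^n$ does not split accordingly.

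For $m\le 2$ with mutually orthogonal modes, inserting the ansatz $v(x)=\sum_j\phi_j(k_{lj}\cdot x)$ into the cell problem $\tfrac12|p+Dv|^{2}-V_l(x)=\ol H_l(p)$, noting that the cross terms vanish by orthogonality, solving the resulting uncoupled one-dimensional cell problems, and using that the effective Hamiltonian of a separable problem is the sum of those of its factors, yields
\[
\ol H_l(p)=\tfrac12\,|p^{\perp}|^{2}+\sum_{j=1}^{m}h_{lj}\!\left(\frac{p\cdot k_{lj}}{|k_{lj}|}\right),
\]
where $p^{\perp}$ is the orthogonal projection of $p$ onto $\mathrm{span}\{k_{l1},\dots,k_{lm}\}^{\perp}$, and each $h_{lj}$ is an even convex function, equal to the constant $-\min W_{lj}=2|\lambda_{lj}|$ on a symmetric interval $[-r_{lj},r_{lj}]$ (with $r_{lj}>0$) and strictly convex with $h_{lj}(r)\to\tfrac12 r^{2}$ as $|r|\to\infty$; here $W_{lj}$ is the one-variable profile of the $j$-th mode, and the point is that $h_{lj}$ depends on $V_l$ only through the \emph{distribution} of $W_{lj}$, hence only through $|\lambda_{lj}|$ — the harmonic number $d_{lj}$ (in $k_{lj}=d_{lj}\hat k_{lj}$, $\hat k_{lj}$ primitive) and the phase $\arg\lambda_{lj}$ are invisible to $\ol H_l$. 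Consequently $\min_{\R^n}\ol H_l=-\min_{\T^n}V_l$ and the minimizing set $\{\ol H_l=\min\ol H_l\}$ equals $\{\sum_j t_j\,k_{lj}/|k_{lj}|:|t_j|\le r_{lj}\}$, a segment if $m=1$ and a solid rectangle if $m=2$. Hence $\ol H_1\equiv\ol H_2$ forces these sets to coincide, so, after relabelling the modes of $V_2$, $k_{1j}\parallel k_{2j}$ and $r_{1j}=r_{2j}$, and since $r_{lj}$ is a strictly increasing function of $|\lambda_{lj}|$ (alternatively, restrict $\ol H_1\equiv\ol H_2$ to the $j$-th axis and apply the one-dimensional inverse theorem of \cite{LTY}) we obtain $|\lambda_{1j}|=|\lambda_{2j}|$. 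For $m=1$ this already gives the claim: write $k_{21}=c\,k_{11}$ with $c\in\mathbb{Q}\setminus\{0\}$, choose $x_0\in\T^n$ with $k_{21}\cdot x_0\equiv(\arg\lambda_{11}-\arg\lambda_{21})/(2\pi)$ in $\R/\Z$, and then $V_1(x)=V_2(x/c+x_0)$. For $m=2$(i) the conclusion is exactly $k_{1j}\parallel k_{2j}$, $|\lambda_{1j}|=|\lambda_{2j}|$, and this is sharp, because $\ol H_l$ does not see $d_{l1},d_{l2}$, so no single $c$ need exist (e.g.\ $k_{11}=e_1,k_{12}=e_2$ against $k_{21}=2e_1,k_{22}=3e_2$ with equal amplitudes). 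The reverse implications are immediate from the displayed formula (each $h_{lj}$ is even and depends only on $|\lambda_{lj}|$), or are the scaling-translation invariance recalled in the introduction.

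For the remaining rigid case $m=2$(ii), $k_{11}\not\perp k_{12}$, the implication ``$\Leftarrow$'' is once more the known scaling-translation invariance, so the content is ``$\Rightarrow$''. One would first recover the two-plane $\Pi=\mathrm{span}\{k_{11},k_{12}\}$ as the set of directions along which $\ol H_l(p)\ne\tfrac12|p|^{2}$ (the cell problem is trivial in the $\Pi^{\perp}$ directions, by the lattice-independence remark), reducing matters to the genuinely two-dimensional object $\ol H_l|_{\Pi}$. Because the modes are not orthogonal, the cross term $\phi'\psi'\,(k_{11}\cdot k_{12})$ no longer vanishes and $\ol H_l$ does not decouple; one must instead extract from the fine structure of $\ol H_l|_{\Pi}$ — the line segments contained in its non-minimal level curves (which exist, by \cite{B2}, as soon as $V_l$ is non-constant), together with their directions, lengths and positions, equivalently the structure of the underlying Aubry--Mather sets — the exact mode vectors $k_{11},k_{12}$ themselves (not merely their directions) and the relative phase $\arg\lambda_{11}-\arg\lambda_{12}$, modulo precisely the scaling $p\mapsto p/c$ ($c\in\mathbb{Q}\setminus\{0\}$) and a torus translation. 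As a pair of non-orthogonal modes is a sub-configuration of the generic three-mode situation, this follows from the analysis behind Theorem~\ref{thm:2d} and case (3) of Theorem~\ref{thm:3d}, applied with a degenerate third mode, and can also be carried out directly in the two-mode setting; the same analysis also supplies the ``only if'' part of case $m=2$(i) when $k_{21}\not\perp k_{22}$, since it would then force the mode directions of $V_2$ to coincide with those of $V_1$ and hence be mutually orthogonal, a contradiction. This step — recovering the exact Fourier data of $V$ from $\ol H$ once the modes genuinely interact — is the main obstacle, and is precisely the difficulty to which the bulk of the present paper is devoted.
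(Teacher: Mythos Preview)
Your treatment of $m=1$ and the ``if'' direction of (2)(i) via separation of variables is correct and close in spirit to the paper's Lemma~\ref{decomposition}; the paper, however, obtains the ``only if'' directions for all parts uniformly from the asymptotic expansion of $\ol H(Q/\sqrt\ep)$ rather than from the shape of the minimum set. In particular, comparing the $a_2$ coefficient (formula~\eqref{a2}) and letting $Q\to k_{1j}^\perp$ already gives $k_{1j}\parallel k_{2j}$ and $|\lambda_{1j}|=|\lambda_{2j}|$ directly, without any hypothesis on $V_2$'s modes --- this cleanly closes the small gap in your argument for (2)(i), where you implicitly used the decomposition formula for $l=2$ before knowing that $k_{21}\perp k_{22}$.

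The real problem is your case (2)(ii). What you wrote is not a proof: you invoke Bangert's flat-segment result and ``the structure of the underlying Aubry--Mather sets'' as a black box from which the exact vectors $k_{1j}$ and the relative phase are to be read off, but you give no mechanism for doing so, and the appeal to ``the analysis behind Theorem~\ref{thm:2d}\dots\ applied with a degenerate third mode'' is circular. In fact the paper uses none of this machinery here. Its argument is short and purely algebraic: after $a_2$ yields $k_{1j}\parallel k_{2j}$ and $|\lambda_{1j}|=|\lambda_{2j}|$, one looks at the $a_4$ coefficient. Because $k_{11}\cdot k_{12}\neq 0$, the vector $k_{11}+k_{12}$ is a \emph{sole vector} of the set $A_1$, and the unique term in $a_4$ carrying the singularity $|(k_{11}+k_{12})\cdot Q|^{-2}$ must be matched by a term in $A_2$ with a parallel singular direction (Remark~\ref{rem:sole}). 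This forces $k_{11}+k_{12}\parallel k_{21}+k_{22}$, and together with $k_{1j}\parallel k_{2j}$ and linear independence of $k_{11},k_{12}$ one gets a single $c\in\mathbb Q\setminus\{0\}$ with $k_{2j}=c\,k_{1j}$ for $j=1,2$. Finally, since $k_{11},k_{12}$ span a two-plane, a translation $x_0$ can be chosen to match both phases simultaneously; no further information (in particular no relation like~\eqref{min-V} or any level-set geometry) is needed in the two-mode case. You should replace the Bangert/Aubry--Mather discussion by this $a_4$ sole-vector step.
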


Theorems \ref{thm:2d}--\ref{thm:3d-2} settle the conjecture stated in \cite[Remark 1.1]{LTY} completely in case $m\leq 3$.
Of course, the case $m>3$ is still open.

We believe that the rigidity property should hold for ``generic"  periodic potentials in any dimension.  More precisely, 
we formulate the following conjecture.
\begin{conj}
We conjecture that

{\rm (1)} Theorem \ref{thm:2d} holds when $m\geq 3$, $n=2$.   

{\rm (2)} If $n\geq 3$,  then the result of Theorem \ref{thm:2d} is valid provided that $V_1, V_2$ belong to a dense open set of smooth periodic functions. 
\end{conj}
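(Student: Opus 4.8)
Toward this conjecture we can only offer a strategy. The guiding idea is to reduce $\ol H_1\equiv\ol H_2$ to three families of Fourier-theoretic constraints and then to observe that, away from a degenerate configuration of modes, those constraints are equivalent to $V_1(x)=V_2(x/c+x_0)$. Write $\lam_{lj}=|\lam_{lj}|e^{i\theta_{lj}}$. A transformation $x\mapsto x/c+x_0$ acts on the data of {\rm (A)} by $k_{lj}\mapsto ck_{lj}$, leaves $a_{l0}$ and $|\lam_{lj}|$ unchanged, and sends $\theta_{lj}\mapsto\theta_{lj}+2\pi ck_{lj}\cdot x_0$; since $k_{lj}\in\Z^n$, the factor $c$ is automatically rational once the modes are matched. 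Hence two potentials as in {\rm (A)} with $m$ modes differ by such a transformation \emph{if and only if}: (i) their mode sets coincide up to relabelling and one common dilation; (ii) $a_{10}=a_{20}$ and $|\lam_{1j}|=|\lam_{2j}|$ for all $j$; and (iii) for every integer relation $\sum_j n_jk_{1j}=0$ the resonance phase $\sum_j n_j\theta_{1j}$ equals, modulo $2\pi$, the corresponding quantity for $V_2$. The first two items are elementary, and (iii) is the standard fact (via injectivity of $\R/\Z$ as a $\Z$-module) that a trigonometric polynomial is determined up to translation by its amplitudes together with all its phase resonances. So the entire problem is to show that $\ol H$ \emph{detects} (i)--(iii), which is precisely what Theorem~\ref{thm:2d} and Theorem~\ref{thm:3d} accomplish for $m=3$.

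The two mechanisms used there should be pushed to general $m$. The first is the behaviour of $\ol H$ for $|p|$ large: since $\frac12|p|^2$ dominates, the cell problem admits a formal expansion $\ol H(\rho\om)=\frac12\rho^2-a_{l0}+\sum_{r\ge1}\rho^{-r}c_{lr}(\om)$ for $|\om|=1$, where each $c_{lr}$ is, for trigonometric-polynomial potentials, an explicit rational function of $\om$ with coefficients polynomial in the $\lam_{lj},\overline{\lam_{lj}}$. The leading correction is the mean $-a_{l0}$; the next coefficient is $c_{l2}(\om)=\sum_j|\lam_{lj}|^2|k_{lj}|^2(\om\cdot k_{lj})^{-2}$, whose poles pin down the \emph{directions} of the $k_{lj}$ and whose residues force $|\lam_{1j}|=|\lam_{2j}|$; and the higher coefficients, through their resonant monomials $\lam_{lj_1}\cdots\overline{\lam_{lj_r}}$ indexed by the relations $\sum n_jk_{lj}=0$, recover the lattice generated by the modes --- hence (i), including a common dilation whenever the modes are sufficiently resonant --- and the phase resonances in (iii). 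The obstacle here is analytic: one must rigorously justify this expansion to arbitrary order despite the small divisors $(\om\cdot k_{lj})^{-1},(\om\cdot(k_{lj}+k_{lj'}))^{-1},\dots$ that proliferate with $r$, whereas at present only the first two orders are known (these yield $\int V_1=\int V_2$ and, under Fourier decay, $\int V_1^2=\int V_2^2$ of \cite[Theorem~1.2]{LTY}). The second mechanism, which is the route taken in the present paper, is the structure near the ground state: $\ol H(0)=-\min_{\T^n}V_l$, so $\min V_1=\min V_2$, and, when $n=2$, Bangert's theorem \cite{B2} forces every non-minimal level curve of $\ol H$ to contain line segments whose normals are rational. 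Identifying the position and length of the facet of $\ol H$ with outward normal $q$ with the one-dimensional (Peierls-type) variational problem attached to $V_l$ in the direction $q$, and letting $q$ range over all rational directions, should recover $V_l$ up to translation, and in particular deliver the common dilation in (i) and the remaining phase resonances in (iii). Carrying out this facet-to-one-dimensional-trace identification, together with the $q$-range argument, is the principal difficulty; it is also why part~(1) is confined to $n=2$, where \cite{B2} guarantees the facets exist.

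For part~(1), $n=2$ and $m\ge3$, the data (i)--(iii) are always equivalent to $V_1(x)=V_2(x/c+x_0)$ with no exceptions, because three or more mutually non-parallel integer vectors in the plane cannot all be pairwise orthogonal, so the decoupling responsible for the extra freedom in Theorem~\ref{thm:3d}(1)--(2) and Theorem~\ref{thm:3d-2}(2)(i) never arises; equivalently, any three non-parallel planar integer vectors are $\Z$-dependent, so the mode lattice always carries enough resonances to force a single common dilation and to see every phase invariant. One would then run the two mechanisms above exactly as in the proof of Theorem~\ref{thm:2d}, replacing the explicit $3$-mode computations by a general argument organizing the resonance combinatorics of $m$ planar modes --- a small-divisor bookkeeping that should be routine once the structure is set up --- and using the abundance of rational rays in the plane for the facet step.

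For part~(2), $n\ge3$, the two mechanisms still yield (i)--(iii), but (i)--(iii) imply $V_1(x)=V_2(x/c+x_0)$ only outside a ``bad'' family $\mathcal{B}$ of potentials: those whose Fourier support splits into mutually orthogonal blocks (the higher-$m$ analogues of Theorem~\ref{thm:3d}(1)--(2)), for which $\ol H$ genuinely loses the dilation factors and the phases. One would then exhibit an explicit set $\mathcal{G}\subset C^\infty(\T^n)$ --- $V\in\mathcal{G}$ when its Fourier support generates $\Z^n$ and admits no nonempty proper orthogonal splitting, its low-frequency amplitudes are in general position, its minimum is nondegenerate, and its Fourier coefficients decay fast enough to license the expansion and the $L^2$-identities of \cite[Theorem~1.2]{LTY} --- and show that the argument closes on $\mathcal{G}$. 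Since orthogonal splittings and amplitude degeneracies are algebraic conditions, $\mathcal{B}$ restricted to finitely many modes is a countable union of proper subvarieties, so a Baire/transversality argument should make $\mathcal{G}$ open and dense in $C^\infty(\T^n)$. Here too the crux is the same: lacking Bangert's theorem, one must supply a substitute --- for instance a direct construction, forced by the generic resonances of $V\in\mathcal{G}$, of Aubry--Mather sets whose signature in the level sets of $\ol H$ records all the phase resonance invariants.
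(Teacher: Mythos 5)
What you are addressing is stated in the paper only as a conjecture; the paper offers no proof, and your text is, as you acknowledge, a research programme rather than a proof, so it cannot be accepted as one. The reduction to your conditions (i)--(iii) is fine, and your first mechanism is exactly the paper's expansion at infinity (Proposition \ref{prop:a4}); but you misplace the obstacle. For fixed $Q$ off finitely many hyperplanes the viscosity-solution comparison used in the paper justifies the expansion to any finite order, so ``small divisors'' are not the crux. The crux is the extraction step: for $m=3$ the paper converts the information carried by the sole vectors (Remark \ref{rem:sole}) into the over-determined linear systems of Lemma \ref{lem:2d-2} and resolves them by a complete case analysis; for general $m$ the family of sole vectors can be small compared with the number of unknown dilation factors, many more coincidences $\pm k_{1j_1}\pm k_{1j_2}=\pm k_{1j_3}\pm k_{1j_4}$ can occur, and no analogue of Lemma \ref{lem:2d-2} is even formulated in your proposal. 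Declaring that the ``resonance combinatorics \dots should be routine'' is precisely the content of part (1) of the conjecture, not an argument for it.

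The phase recovery step has the same problem in sharper form. In the paper there is, for $m=3$, $n=2$, a single resonance relation and hence a single unknown phase, and it is pinned down by the one scalar identity \eqref{min-V} together with the monotonicity of $M$ (Proposition \ref{prop:M}). For $m>3$ there are in general many independent phase invariants, while $\max_{\T^2}V_1=\max_{\T^2}V_2$ is one equation; your first substitute --- reading the phases off higher coefficients $a_L$ --- is exactly what Remark \ref{rem:higher-coeff} explains the authors could not implement, because the factor $J(k_1,\dots,Q)$ multiplying the resonant monomial may vanish and its expression is intractable; your second substitute (facets of $\ol{H}$ via Bangert's theorem and Aubry--Mather sets) is not developed to a point where any step could be checked, and you concede it is the principal difficulty. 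Finally, for part (2) your set $\mathcal{G}$ is described only through conditions on finitely many modes, whereas generic $V\in C^\infty(\T^n)$ has infinite Fourier support, to which assumption {\rm (A)}, the sole-vector mechanism and the finitely-many-denominators expansion do not directly apply; in particular openness and density of $\mathcal{G}$ in the $C^\infty$ topology, and the passage from ``generic finitely many modes'' to ``dense open set of smooth potentials'', are asserted rather than argued. These are genuine gaps, and the statement remains open.
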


It is clear that this conjecture is stronger than that in  \cite[Remark 1.1]{LTY}, but under the caveat that we require a generic assumption on $V_1, V_2$.
Otherwise, it does not hold true (see parts (1)--(2) of Theorem \ref{thm:3d} and part (2)(i) of Theorem \ref{thm:3d-2} above).

\subsection {Outline of the paper.}  
In Section \ref{sec:prelim},  we give a quick review of the method of asymptotic expansions of $\overline{H}_1, \overline{H}_2$ at infinity introduced in \cite{LTY}
(see also \cite{JTY}). 
This is our main tool in studying the inverse problem.  
The proofs of our results will be given in Sections \ref{sec:2d} and \ref{sec:3d}.  
They involve delicate analysis combining plane geometry, linear algebra and trigonometric functions.


\section{Preliminary: Asymptotic expansion of $\ol{H}_1, \ol{H}_2$ at infinity} \label{sec:prelim}

 \subsection{Settings}
For $x \in \R^n$, we write $x=(x_1,x_2,...,x_n)$.

Assume there exists $m\in \N$  such that (A) holds.
Let us only perform calculations with respect to $\ol{H}_1$. 
In light of (A), $V_1$ satisfies that
\[
\begin{cases}
V_1(x)= a_{10}+\sum_{j=1}^m( \lam_{1j} e^{i 2\pi k_{1j} \cdot x} + \overline{\lam_{1j}}e^{-i 2 \pi k_{1j} \cdot x}),\\
\text{where $a_{10} \in \R$, $\{\lam_{1j}\}_{j=1}^m \subset \mathbb{C}$ and $\{k_{1j}\}_{j=1}^m \subset \Z^n \setminus \{0\}$
such that}\\
\text{each pair of the $m$ vectors $\{k_{1j}\}_{j=1}^m$ are not parallel.}
\end{cases}
\]
Here, $ \overline{\lam_{1j}}$ is the complex conjugate of $\lam_{1j}$ for $1\leq j \leq m$.



\subsection{Asymptotic expansion at infinity}

For a given vector $Q \neq 0$ and $\ep>0$, set $p =\frac{Q}{\sqrt{\ep}}$.
The cell problem for this vector $p$ is
\[
\frac{1}{2}\left|\frac{Q}{\sqrt{\ep}}+Dv^\ep_1\right|^2 + V_1(x) = \ol{H}_1\left(\frac{Q}{\sqrt{\ep}}\right) \quad \text{ in } \T^n.
\]
Here, $v^\ep_1 \in C(\T^n)$ is a solution to the above.
Multiply both sides by $\ep$ to yield
\begin{equation}\label{ae}
\frac{1}{2}|Q+ \sqrt{\ep} Dv^\ep_1|^2 + \ep V_1(x) =  \ep \ol{H}_1\left(\frac{Q}{\sqrt{\ep}}\right)=:\ol{H}^\ep(Q) \quad \text{ in } \T^n.
\end{equation}
Let us first use a formal asymptotic expansion to do computations.
We use the following ansatz
\[
\begin{cases}
\sqrt{\ep} v^\ep_1(x) = \ep v_{11}(x) + \ep^2 v_{12}(x) + \ep^3 v_{13}(x)+\cdots,\\
\ol{H}^\ep(Q) = a_0 +\ep a_1 + \ep^2 a_2 + \ep^3 a_3+\cdots.
\end{cases}
\]
Plug these into \eqref{ae} to imply
\[
\frac{1}{2} |Q+ \ep Dv_{11} + \ep^2 Dv_{12} +\cdots|^2  + \ep V_1  = \ol{H}^\ep(Q) = a_0 + \ep a_1 + \ep^2 a_2 +\cdots \quad \text{ in } \T^n.
\]
We first compare the $O(1)$ terms in both sides of the above equality to get 
\[
a_0= \frac{1}{2}|Q|^2.
\]
By using $O(\ep)$, we get 
\[
Q \cdot Dv_{11} + V_1 = a_1  \quad \text{ in } \T^n.
\]
Hence, $a_1=\int_{\T^n} V_1\,dx = a_{10}$ and
\begin{equation}\label{v11}
Dv_{11} = -\sum_{j=1}^m (\lam_{1j} e^{i 2 \pi k_{1j} \cdot x} + \ol{\lam_{1j}} e^{-i 2 \pi k_{1j} \cdot x}) \frac{k_{1j}}{k_{1j} \cdot Q}.
\end{equation}
Next, using $O(\ep^2)$, we achieve that
\begin{equation}\label{a2}
a_2 = \sum_{j=1}^m \frac{|\lam_{1j}|^2 |k_{1j}|^2}{ |k_{1j}\cdot Q|^2},
\end{equation}
and furthermore,
\begin{align*}
Q\cdot Dv_{12} &= a_2 - \frac{1}{2} |Dv_{11}|^2\\
&=-\frac{1}{2} \sum_{ \pm k_{1j} \pm k_{1l} \neq 0} \frac{\lam_{1j}^{\pm} \lam_{1l}^{\pm} k_{1j} \cdot k_{1l}}{(k_{1j}\cdot Q) (k_{1l}\cdot Q)} e^{i 2\pi (\pm k_{1j} \pm k_{1l})\cdot x}.
\end{align*}
Here for convenience, for $1\leq j \leq m$, we denote by
$$
\lam_{1j}^{+}=\lam_{1j} \quad \text{and} \quad \lam_{1j}^{-}=\overline{\lam_{1j}}.
$$  Thus,
\begin{equation*}\label{v12}
Dv_{12} = -\frac{1}{2} \sum_{ \pm k_{1j} \pm k_{1l} \neq 0} \frac{\lam_{1j}^{\pm}\lam_{1l}^{\pm} k_{1j} \cdot k_{1l}}{(k_{1j}\cdot Q) (k_{1l}\cdot Q)} e^{i 2\pi (\pm k_{1j} \pm k_{1l})\cdot x}
\frac{\pm k_{1j} \pm k_{1l}}{(\pm k_{1j} \pm k_{1l})\cdot Q}.
\end{equation*}
Let us now switch to a symbolic way of writing to keep track with all terms. 
Denote by $\sum_G$ to be a good sum where all terms are well-defined, that is, all denominators of the fractions in the sum are not zero. We have
\begin{equation}\label{v12}
Dv_{12} = -\frac{1}{2} \sum_{G} \frac{\lam_{1j_1}^{\pm} \lam_{1j_2}^{\pm} k_{1j_1} \cdot k_{1j_2}}{(k_{1j_1}\cdot Q) (k_{1j_2}\cdot Q)} e^{i 2\pi (\pm k_{1j_1} \pm k_{1j_2})\cdot x}
\frac{\pm k_{1j_1} \pm k_{1j_2}}{(\pm k_{1j_1} \pm k_{1j_2})\cdot Q}.
\end{equation}
Let us now look at $O(\ep^3)$:
\[
Q \cdot Dv_{13} = a_3 - Dv_{11}\cdot Dv_{12}.
\]
Hence,
\[
a_3 = \int_{\T^n} Dv_{11}\cdot Dv_{12}\,dx,
\]
and
\begin{multline}\label{v13}
Dv_{13} =- \frac{1}{2} \sum_{G} \frac{\lam_{1j_1}^{\pm} \lam_{1j_2}^{\pm} \lam_{1 j_3}^{\pm} (k_{1j_1} \cdot k_{1j_2})(\pm k_{1j_1} \pm k_{1j_2})\cdot k_{1 j_3}}{(k_{1j_1}\cdot Q) (k_{1j_2}\cdot Q)(k_{1j_3}\cdot Q)(\pm k_{1j_1} \pm k_{1j_2})\cdot Q}  \times\\
\times e^{i 2\pi (\pm k_{1j_1} \pm k_{1j_2} \pm k_{1j_3})\cdot x}
\frac{\pm k_{1j_1} \pm k_{1j_2} \pm k_{1 j_3}}{(\pm k_{1j_1} \pm k_{1j_2} \pm k_{1 j_3})\cdot Q}.
\end{multline}
The $O(\ep^4)$ term yields
\[
Dv_{11} \cdot Dv_{13} + \frac{1}{2} |Dv_{12}|^2 + Q\cdot Dv_{14} = a_4.
\]
Integrate to get
$$
a_4= \frac{1}{2}  \int_{\T^2}|Dv_{12}|^2 \,dx+ \int_{\T^2} Dv_{11} \cdot Dv_{13} \,dx.
$$
The first integral in the formula of $a_4$  contains terms like $I(j_1,j_2)+I(j_3,j_4)+II(j_i,j_2,j_3,j_4)$ with 
$$
I(j_1,j_2)=\frac{1}{8}\frac{|\lam_{1j_1}|^2 |\lam_{1j_2}|^2 |k_{1j_1} \cdot k_{1j_2}|^2 |\pm k_{1j_1} \pm k_{1j_2}|^2}{|k_{1j_1}\cdot Q|^2 |k_{1j_2}\cdot Q|^2 |(\pm k_{1j_1} \pm k_{1j_2})\cdot Q|^2},
$$
and $I(j_3,j_4)$ is of the exact same form with $(j_3,j_4)$ in place of $(j_1,j_2)$. Besides,
$$
II(j_i,j_2,j_3,j_4)= \frac{1}{8} {\lam_{1j_1}^{\pm} \lam_{1j_2}^{\pm} \lam_{1j_3}^{\pm} \lam_{1j_4}^{\pm} (k_{1j_1} \cdot k_{1j_2})(k_{1j_3} \cdot k_{1j_4})\over (k_{1j_1}\cdot Q) (k_{1j_2}\cdot Q)(k_{1j_3}\cdot Q) (k_{1j_4}\cdot Q)}\cdot { |\pm k_{1j_1} \pm k_{1j_2}|^2\over |(\pm k_{1j_1} \pm k_{1j_2})\cdot Q|^2} 
$$
provided that $(j_1,j_2) \neq (j_3,j_4)$ and $\pm k_{1j_1} \pm k_{1j_2} \pm k_{1 j_3} \pm k_{1 j_4} =0$. 

It is more important noticing that the terms that are not vanished in the above second integral of $a_4$ are the ones that have $\pm k_{1j_1} \pm k_{1j_2} \pm k_{1 j_3} \pm k_{1 j_4} =0$.
Hence, $\pm k_{1j_1} \pm k_{1j_2} \pm k_{1 j_3} = \mp k_{1 j_4}$ and these terms look like
\begin{equation}\label{v11-v13}
 \frac{\lam_{1j_1}^{\pm} \lam_{1j_2}^{\pm} \lam_{1 j_3} ^{\pm}\lam_{1 j_4}^{\pm} (k_{1j_1} \cdot k_{1j_2})[(\pm k_{1j_1} \pm k_{1j_2})\cdot k_{1 j_3}] |k_{1j_4}|^2}{(k_{1j_1}\cdot Q) (k_{1j_2}\cdot Q)(k_{1j_3}\cdot Q)[(\pm k_{1j_1} \pm k_{1j_2})\cdot Q] |k_{1j_4}\cdot Q|^2}.
\end{equation}
Of course, $v_{14}$ satisfies
\begin{equation}\label{v14}
Q\cdot Dv_{14} = a_4 -Dv_{11} \cdot Dv_{13} - \frac{1}{2} |Dv_{12}|^2 .
\end{equation}

By computing in an iterative way, we can get formulas of $a_l$ and $v_{1l}$ for all $l\in \N$.
It turns out that this formal asymptotic expansion of $\ol{H}^\ep(Q)$ holds true rigorously.
For our purpose here, we only need the first five terms in the expansion.

\begin{prop}\label{prop:a4}
Assume that $H(p)=\frac{1}{2}|p|^2$ for all $p\in \R^n$ and {\rm (A)} holds.
Let  $\ol{H}_1$ be the effective Hamiltonian corresponding to the Hamiltonian $H(p)+V_1(x)$.
Let $Q \neq 0$ be a vector in $\R^n$ such that $Q$ is not perpendicular to 
each nonzero vector of  $k_{1j_1}, \pm k_{1j_1} \pm k_{1j_2}, \pm k_{1j_1} \pm k_{1j_2} \pm k_{1 j_3}$ 
and $ \pm k_{1j_1} \pm k_{1j_2} \pm k_{1 j_3} \pm k_{1 j_4}$ for 
$1\leq j_1, j_2, j_3, j_4 \leq m$.

For $\ep>0$, set $\ol{H}^\ep(Q) = \ep \ol{H}_1\left(\frac{Q}{\sqrt{\ep}}\right)$. 
Then we have that, as $\ep \to 0$,
\[
\ol{H}^\ep(Q) = \frac{1}{2}|Q|^2 +\ep a_1+ \ep^2 a_2 + \ep^3 a_3 + \ep^4 a_4 + O(\ep^5).
\]
Here the error term satisfies $|O(\ep^5)| \leq K \ep^5$ for some $K$ depending only on $Q$, $\{\lam_{1j}\}_{j=1}^m$
and $\{k_{1j}\}_{j=1}^m$.
\end{prop}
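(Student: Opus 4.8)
The plan is to turn the formal asymptotic expansion carried out above into a rigorous one by a standard perturbative argument for the cell problem. First I would fix $Q\neq 0$ satisfying the stated non-degeneracy hypotheses, and work with the rescaled cell problem \eqref{ae}, namely
\[
\tfrac12\,|Q+\sqrt{\ep}\,Dv_1^\ep|^2 + \ep V_1 = \ol H^\ep(Q) \quad\text{in }\T^n .
\]
Since $H(p)=\tfrac12|p|^2$ is smooth and uniformly convex and $V_1$ is a trigonometric polynomial, the solution $v_1^\ep$ of the original cell problem is in fact $C^\infty(\T^n)$ for $\ep$ small (one can invoke the classical regularity theory for the stationary Hamilton--Jacobi equation with strictly convex smooth Hamiltonian and smooth potential; alternatively, because $|Q|/\sqrt{\ep}$ is large, the equation is a small perturbation of the constant-coefficient one and the solution is unique up to constants and smooth). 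The first task is then to record the a priori bounds: $\|\sqrt{\ep}\,Dv_1^\ep\|_{L^\infty}\le C\sqrt{\ep}\,\|V_1\|_{L^\infty}^{1/2}$ and, by bootstrapping, $\|\sqrt{\ep}\,D^k v_1^\ep\|_{L^\infty}\le C_k\,\ep$ for each $k$, with constants depending only on $Q$ and on the data of $V_1$.

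Next I would set up the ansatz rigorously: define the partial sum
\[
w^\ep(x) := \ep v_{11}(x)+\ep^2 v_{12}(x)+\ep^3 v_{13}(x)+\ep^4 v_{14}(x),
\]
where $v_{11},v_{12},v_{13},v_{14}$ are the explicit trigonometric polynomials produced in \eqref{v11}, \eqref{v12}, \eqref{v13}, \eqref{v14} (these are well-defined precisely because $Q$ is not perpendicular to any of the relevant frequency vectors — this is where the hypothesis on $Q$ is used), and let
\[
b^\ep := \tfrac12|Q|^2 + \ep a_1 + \ep^2 a_2 + \ep^3 a_3 + \ep^4 a_4
\]
with $a_1,\dots,a_4$ as computed above. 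Plugging $w^\ep$ into the left-hand side of \eqref{ae} and using that the $O(1),\dots,O(\ep^4)$ terms cancel by construction of the $v_{1l}$ and $a_l$, one gets
\[
\tfrac12|Q+Dw^\ep|^2 + \ep V_1 = b^\ep + \ep^5 R^\ep(x) \quad\text{in }\T^n,
\]
where $R^\ep$ is an explicit trigonometric polynomial (a finite sum of products of the $Dv_{1l}$'s, times bounded rational functions of the quantities $k_{1j}\cdot Q$ etc.), hence $\|R^\ep\|_{L^\infty}\le K_0$ for $\ep\le 1$, with $K_0$ depending only on $Q$ and the data.

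Finally I would compare this approximate solution with the true one. Subtracting the equation for $v_1^\ep$ from the equation for $w^\ep$ and writing $\phi^\ep := \sqrt{\ep}\,v_1^\ep - w^\ep$, one obtains a linear equation for $\phi^\ep$ of the form $B^\ep(x)\cdot D\phi^\ep = (\ol H^\ep(Q) - b^\ep) - \ep^5 R^\ep$ on $\T^n$, where $B^\ep(x) = Q + \tfrac12(Dw^\ep + \sqrt{\ep}Dv_1^\ep)$ is $O(\sqrt{\ep})$-close to the constant vector $Q$. Integrating against the invariant measure of the divergence-free drift $B^\ep$ (or, more elementarily, integrating directly over $\T^n$ and controlling the error from the non-constancy of $B^\ep$ using the a priori gradient bounds), the $D\phi^\ep$ term drops out and one concludes $|\ol H^\ep(Q) - b^\ep| \le K\ep^5$ with $K$ depending only on $Q$, $\{\lam_{1j}\}$ and $\{k_{1j}\}$. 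This is exactly the claimed estimate.

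The step I expect to be the main (if modest) obstacle is the last one: making rigorous the assertion that the $D\phi^\ep$ term can be eliminated. For the constant-coefficient operator $Q\cdot D$ this is trivial (integrate over $\T^n$), but here the drift $B^\ep$ depends on the unknown $v_1^\ep$, so one must either (i) use the ergodic/invariant-measure structure of the exact cell problem — the solvability of $\tfrac12|Q+\sqrt\ep Dv^\ep|^2 + \ep V = \ol H^\ep$ already tells us $\ol H^\ep(Q)$ is the unique constant for which a periodic solution exists, so one compares $b^\ep + \ep^5R^\ep$ against this unique constant via a viscosity-solution comparison argument, getting $|\ol H^\ep(Q) - b^\ep|\le \ep^5\|R^\ep\|_{L^\infty}$ directly; or (ii) keep the PDE linear by freezing coefficients and absorbing the $O(\sqrt\ep)$ correction into a Neumann-series/iteration scheme. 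Approach (i) is cleanest and is what I would write up: it uses only the comparison principle for \eqref{E-p}, which is already available, and avoids any further regularity theory beyond the smoothness of the $v_{1l}$, which is manifest since they are trigonometric polynomials.
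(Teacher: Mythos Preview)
Your proposal is correct, and the approach (i) you eventually settle on---constructing the smooth approximate corrector $w^\ep$, plugging it into \eqref{ae} to obtain an $O(\ep^5)$ remainder, and then comparing $\ol H^\ep(Q)$ with $b^\ep$ via the viscosity sub/supersolution test at the maximum and minimum of $\sqrt{\ep}\,v_1^\ep - w^\ep$---is precisely the paper's argument, carried out there in a single paragraph. The earlier parts of your plan (a priori smoothness and gradient bounds for the true solution $v_1^\ep$, the subtraction to a linear transport equation with drift $B^\ep$, the invariant-measure discussion) are unnecessary scaffolding: the max/min comparison only requires that $w^\ep$ be smooth, which is manifest since it is a trigonometric polynomial, and that $\sqrt{\ep}\,v_1^\ep$ be a viscosity solution of \eqref{ae}; no regularity of the true solution beyond that is used.
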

Let us present the proof of this proposition here for the sake of completeness.
A version of this was presented in \cite[Proof of Theorem 1.2 (Part 3)]{LTY}.
See also \cite[Lemma 3.1]{JTY}.
\begin{proof}
Let $v_{11}, v_{12}, v_{13}, v_{14}$ be solutions to \eqref{v11}, \eqref{v12}, \eqref{v13}, \eqref{v14}, respectively.
Let $\phi= \ep v_{11}+\ep^2 v_{12}+ \ep^3 v_{13}+\ep^4 v_{14}$, then $\phi$ satisfies
\[
\frac{1}{2}|Q+D\phi|^2 + \ep V_1 =  \frac{1}{2}|Q|^2 +\ep a_1+ \ep^2 a_2 + \ep^3 a_3 + \ep^4 a_4 + O(\ep^5) \quad \text{ in } \T^n.
\]
Recall that $w=\sqrt{\ep} v_1^\ep$ is a solution to \eqref{ae}.
By looking at the places where $w-\phi$ attains its maximum and minimum and using the definition of viscosity solutions, we arrive at the conclusion.
\end{proof}

We prepare some further definitions.
Denote 
\[
\begin{cases}
A_1=\left \{\pm k_{1j}, \ \pm k_{1j}\pm k_{1l} \,:\, 1\leq j, l \leq m  \text{ and $k_{1j}\cdot k_{1l}\not=0$}  \right\},\\
 A_2=\left\{\pm k_{2j},\ \pm k_{2j}\pm k_{2l} \,:\,  1\leq j, l \leq m  \text{ and $k_{2j}\cdot k_{2l}\not=0$} \right\}.
\end{cases}
\]
In other words, if $k_{1j} \cdot k_{2j} =0$ for some $i, j \in \{1,\ldots, m\}$,
then we do not collect $\pm k_{1j} \pm k_{2j}$ in $A_1$.
\begin{defn}[Sole vectors]
A vector $\alpha k_{1j_1} +\beta k_{1j_2}$, where $\alpha, \beta \in \{-1,1\}$ and $1 \leq j_1, j_2 \leq m$,
 is called  a sole vector  from $A_1$ if  it is in $A_1$ and is not equal to any other vectors in $A_1$.
 
 A vector $\alpha k_{2j_1} +\beta k_{2j_2}$, where $\alpha, \beta \in \{-1,1\}$ and $1 \leq j_1, j_2 \leq m$,
 is called  a sole vector  from $A_2$ if   it is in $A_2$ and is not equal to any other vectors in $A_2$.
 \end{defn}

\begin{rem}\label{singular} If $\alpha k_{1 j_1}+\beta k_{1 j_2}$  is a sole vector from $A_1$, then
$$
\frac{1}{4}\frac{|\lam_{1j_1}|^2 |\lam_{1j_2}|^2 |k_{1j_1} \cdot k_{1j_2}|^2 |\alpha k_{1j_1} +\beta k_{1j_2}|^2}{|k_{1j_1}\cdot Q|^2 |k_{1j_2}\cdot Q|^2 |(\alpha k_{1j_1} +\beta k_{1j_2})\cdot Q|^2}
$$
is the only term in $a_4$ containing $ {1\over |(\alpha k_{1j_1} +\beta  k_{1j_2})\cdot Q|^2}$.

\end{rem}

\begin{defn} Let $A_1$ and $A_2$ are two sets of vectors in $\mathbb{R}^n$. We write 
\[
A_1\prec A_2
\]
if for any $u\in A_1\setminus \{0\}$, there exists $v\in A_2 \setminus \{0\}$ such that $u\parallel v$.
\end{defn}

\begin{rem}\label{rem:sole}

If $\overline H_1\equiv \overline H_2$,  then Remark \ref{singular}, together with Proposition \ref{prop:a4}, implies  that 
$$
\begin{cases}
\left\{\text{Sole vectors from $A_1$} \right\}\prec A_2\\
\left\{\text{Sole vectors from $A_2$} \right\}\prec A_1.
\end{cases}
$$
Heuristically, this  could lead to an over-determined linear system, which plays a key role in proving our rigidity results. 
\end{rem}

\section{Proof of theorem \ref{thm:2d}} \label{sec:2d}

In this section, we always assume that the settings in Theorem \ref{thm:2d} are in force. 
In particular, we have $n=2$ and $m=3$. 
Without loss of generality,  we assume further that for $l=1,2$, 
\begin{itemize}
\item[(H)]  $ k_{l1}, \ k_{l2},\  k_{l3}$ are aligned in the counter-clockwise order on the upper half plane $\{x=(x_1,x_2)\,:\,x_2 \geq 0\}$.
\end{itemize}
See Figure \ref{fig:V1} below.

\begin{figure}[h]
\begin{center}
\begin{tikzpicture}

\draw[->] (-0.5,0)--(4,0);
\draw[->] (0,-2)--(0,4);
\draw (4,-0.2) node {$x_1$};
\draw (-0.2,4) node {$x_2$};

\draw[blue, ->] (0,0)--(3,1);
\draw[blue, dashed, ->] (0,0)--(-3,-1);
\draw (3.3,1) node {$k_{11}$};

\draw[red,->] (0,0)--(2,2);
\draw[red,dashed,->] (0,0)--(-2,-2);
\draw (2.3,2) node{$k_{12}$};

\draw[cyan,->] (0,0)--(-1,4);
\draw[cyan, dashed, ->] (0,0)--(1,-4);
\draw (-1.3,4) node{$k_{13}$};

\end{tikzpicture}
\caption{The vectors $\{k_{1j}\}_{j=1}^3$}  \label{fig:V1}
\end{center}
\end{figure}
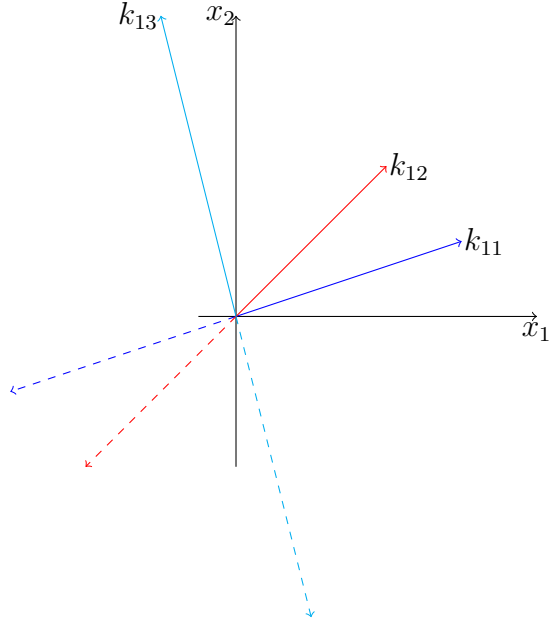

We proceed to prove Theorem \ref{thm:2d} via the following lemmas.

\begin{lem}\label{lem:2d-1}
Assume that the settings in Theorem \ref{thm:2d} hold. Then $a_{10}=a_{20}$ and, for all $1\leq j \leq 3$,
\[
 |\lam_{1j}| = |\lam_{2j}| \quad \text{and} \quad \frac{k_{1j}}{|k_{1j}|} = \frac{k_{2j}}{|k_{2j}|}.
\]
\end{lem}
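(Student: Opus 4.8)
The plan is to read off the entire statement from just the first two nonconstant coefficients $a_1$ and $a_2$ in the asymptotic expansion of Proposition \ref{prop:a4}; the later lemmas will need $a_3,a_4$ and the sole-vector analysis, but this one does not. Writing $\ol{H}_l^\ep(Q):=\ep\,\ol{H}_l\!\left(Q/\sqrt\ep\right)$, the hypothesis $\ol H_1\equiv\ol H_2$ gives $\ol{H}_1^\ep\equiv\ol{H}_2^\ep$ for every $\ep>0$. Restricting $Q$ to the dense open subset of $\R^2$ on which both expansions in Proposition \ref{prop:a4} are valid (the complement of finitely many lines through the origin) and matching powers of $\ep$, we obtain $a_l^{(1)}(Q)=a_l^{(2)}(Q)$ for every $l$ and every such $Q$, where the superscript records which potential is used. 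The coefficient $a_1=\int_{\T^2}V_l\,dx=a_{l0}$ immediately yields $a_{10}=a_{20}$.

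For $l=2$, formula \eqref{a2} gives the identity
\[
\sum_{j=1}^3 \frac{|\lam_{1j}|^2\,|k_{1j}|^2}{(k_{1j}\cdot Q)^2} \;=\; \sum_{j=1}^3 \frac{|\lam_{2j}|^2\,|k_{2j}|^2}{(k_{2j}\cdot Q)^2}
\]
for all $Q$ outside finitely many lines through the origin. Fix $j_0\in\{1,2,3\}$ and set $L_{j_0}=\{Q:\,Q\cdot k_{1j_0}=0\}$. Since $k_{11},k_{12},k_{13}$ are pairwise non-parallel, and likewise $k_{21},k_{22},k_{23}$, one can pick $Q_0\in L_{j_0}\setminus\{0\}$ with $Q_0\cdot k_{1j}\neq0$ for $j\neq j_0$ and $Q_0\cdot k_{2j}\neq0$ whenever $k_{2j}\not\parallel k_{1j_0}$. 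Choosing $w$ with $w\cdot k_{1j_0}=1$ and putting $Q_t=Q_0+tw$, we have $k_{1j_0}\cdot Q_t=t$ and $Q_t\to Q_0$ as $t\to0$, so the left-hand side equals $|\lam_{1j_0}|^2|k_{1j_0}|^2\,t^{-2}+O(1)$. On the right-hand side a term blows up as $t\to0$ only when $k_{2j}\parallel k_{1j_0}$; there is at most one such index, and for it $k_{2j}=s\,k_{1j_0}$ with $s\neq0$ (both vectors lie in $\Z^2$), so that term equals $|\lam_{2j}|^2|k_{1j_0}|^2\,t^{-2}+O(1)$. Comparing the $t^{-2}$ coefficients, and using $\lam_{1j_0}\neq0$, forces such an index, call it $\sig(j_0)$, to exist, with $k_{2\sig(j_0)}\parallel k_{1j_0}$ and $|\lam_{2\sig(j_0)}|=|\lam_{1j_0}|$.

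Running this for $j_0=1,2,3$, and then with the roles of $V_1$ and $V_2$ exchanged, shows that $\sig$ is a bijection of $\{1,2,3\}$ with $k_{2\sig(j)}\parallel k_{1j}$ and $|\lam_{2\sig(j)}|=|\lam_{1j}|$ for each $j$. Hypothesis (H) then pins down $\sig$: the triples $(k_{1j})_{j=1}^3$ and $(k_{2j})_{j=1}^3$ lie in the closed upper half-plane and are listed counter-clockwise, hence two of these vectors are parallel precisely when they point in the same direction, and an order-preserving bijection between two equal ordered sets of directions must be the identity. So $\sig=\mathrm{id}$, giving $k_{1j}/|k_{1j}|=k_{2j}/|k_{2j}|$ and $|\lam_{1j}|=|\lam_{2j}|$ for $j=1,2,3$, which with $a_{10}=a_{20}$ is the claim. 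I expect the only delicate point to be the bookkeeping in the second paragraph — choosing $Q_0$ so that exactly one pole is activated along $L_{j_0}$ as $Q_t\to Q_0$ — together with the elementary but necessary appeal to (H) to upgrade the set-level matching of directions to the index-by-index identity; everything else is a direct comparison of Laurent coefficients.
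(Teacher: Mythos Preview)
Your proof is correct and follows the same route as the paper: match the $a_1$ and $a_2$ coefficients from Proposition \ref{prop:a4}, then send $Q$ toward $k_{1j}^\perp$ to isolate the singular term and invoke (H) to align indices. The paper compresses your second and third paragraphs into a single sentence (``By letting $Q\to k_{1j}^\perp$, we use (H) to conclude\ldots''), so you have simply unpacked the blow-up and the $\sigma=\mathrm{id}$ bookkeeping that the paper leaves implicit.
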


\begin{proof}
We use the asymptotic expansion of $\ol{H}^\ep(Q)$ in Proposition \ref{prop:a4} and compare the coefficients to get the conclusion.
Firstly, by comparing $a_1$, we imply $a_{10}=a_{20}$ immediately.

 Secondly, we use the formula of $a_2$ given in \eqref{a2} to get
\[
 \sum_{j=1}^3 \frac{|\lam_{1j}|^2 |k_{1j}|^2}{ |k_{1j}\cdot Q|^2}= \sum_{j=1}^3 \frac{|\lam_{2j}|^2 |k_{2j}|^2}{ |k_{2j}\cdot Q|^2}.
\]
Fix $j\in \{1,2,3\}$. By letting $Q \to k_{1j}^\perp$, we use (H) to conclude
\begin{equation}\label{3mode-1}
\frac{k_{1j}}{|k_{1j}|} = \frac{k_{2j}}{|k_{2j}|} \quad \text{and} \quad |\lam_{1j}| = |\lam_{2j}|.
\end{equation}
\end{proof}

Thanks to Lemma \ref{lem:2d-1}, for $1 \leq j \leq 3$, there exists $\alpha_j>0$ such that
 \[
 k_{1j}=\alpha_j k_{2j}.
 \]
 The following is a result in linear algebra (or plane geometry), which we believe is of independent interest.

\begin{lem}\label{lem:2d-2}
For $j=1,2,3$, let $\alpha_j>0$ be a given number. 
Let $u_1$, $u_2$ and $u_3$ be non-parallel vectors on the upper half plane $\{x=(x_1,x_2)\,:\,x_2 \geq 0\}$, which are aligned in the counter-clockwise order. 
Set
\[
\begin{cases}
S_1=\{\pm u_i,\ \pm u_i\pm u_j \,:\,  1\leq i, j \leq 3 \text{ and $u_i\cdot u_j\not=0$}\},\\
 S_2=\{\pm u_i, \  \pm\alpha_i u_i \pm \alpha_j u_j \,:\,  1\leq i<j\leq 3 \text{ and $u_i\cdot u_j\not=0$}\}.
\end{cases}
\]

 If
\[
\begin{cases}
\{\text{Sole vectors from $S_1$} \}\prec S_2,\\
\{\text{Sole vectors from $S_2$} \}\prec S_1,
\end{cases}
\]
then $\alpha_1=\alpha_2=\alpha_3$.
\end{lem}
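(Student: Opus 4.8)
The plan is to exploit the hypotheses $\{\text{Sole vectors from }S_1\}\prec S_2$ and $\{\text{Sole vectors from }S_2\}\prec S_1$ to pin down the directions of the combination vectors $u_i \pm u_j$ against those of $\alpha_i u_i \pm \alpha_j u_j$, and then argue that parallelism of these combinations forces the scaling factors to coincide. First I would set up notation: since $u_1,u_2,u_3$ are non-parallel and ordered counter-clockwise in the upper half-plane, the six vectors $\pm u_i$ are pairwise non-parallel, and the candidate combination vectors are $u_i+u_j$ and $u_i-u_j$ for the pairs $(i,j)$ with $u_i\cdot u_j\neq 0$. A combination $u_i\pm u_j\in S_1$ is a \emph{sole} vector unless it happens to be parallel to another element of $S_1$; I would first dispose of the degenerate coincidences (e.g. $u_1+u_2 \parallel u_3$, or $u_i+u_j \parallel u_i - u_j$ which cannot happen since $u_i\cdot u_j\neq 0$, or $u_1+u_2 \parallel u_1-u_3$, etc.). The key geometric fact is that two vectors $a,b$ and their scaled versions $\beta a, \gamma b$ satisfy $a+b \parallel \beta a + \gamma b$ if and only if $\beta=\gamma$ (given $a,b$ linearly independent); similarly for $a-b$. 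So if, say, both $u_1+u_2$ and $u_1-u_2$ are sole vectors from $S_1$, then each must be parallel to some vector in $S_2$; the only vectors in $S_2$ that can be parallel to $u_1+u_2$ (and the only ones that can be parallel to $u_1-u_2$) are the ones built from $u_1,u_2$, namely $\pm(\alpha_1 u_1 \pm \alpha_2 u_2)$ — here I would rule out parallelism with $\pm u_k$ and with combinations involving $u_3$ using the ordering hypothesis and linear independence — and this immediately yields $\alpha_1=\alpha_2$.

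Carrying this out requires knowing which combinations are actually sole. The worry is that for special configurations many combination vectors become parallel to one another (think of $u_i$ lying on a lattice with lots of collinearities), in which case the "sole vector" hypothesis gives less direct information on some pairs. The plan to handle this: I would do a case analysis on the mutual orthogonality pattern of $u_1,u_2,u_3$, since $u_i\cdot u_j = 0$ removes the pair $(i,j)$ from consideration entirely. If no pair is orthogonal, all three pairs contribute combination vectors; if one pair is orthogonal, only two pairs contribute; two pairs cannot be simultaneously orthogonal in $\R^2$ among three ordered vectors unless... — actually three mutually orthogonal nonzero vectors in $\R^2$ is impossible, so at most one pair is orthogonal, but I should also handle the subcase where exactly one pair, say $u_1\perp u_2$, is orthogonal. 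In the generic (no orthogonal pair) case I would show that one can always find, for each index $k$, a pair $(k,j)$ for which both $u_k+u_j$ and $u_k-u_j$ are sole (or at least one of a dual pair is sole and the hypothesis in the reverse direction $S_2 \prec S_1$ supplies the rest), chaining the conclusions $\alpha_i=\alpha_j$ across the three pairs to get $\alpha_1=\alpha_2=\alpha_3$.

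For the orthogonal subcase $u_1\perp u_2$: then $S_1$ contains combinations only from the pairs $(1,3)$ and $(2,3)$, and $S_2$ correspondingly from those pairs after scaling. Applying the argument above to pair $(1,3)$ gives $\alpha_1=\alpha_3$ and to pair $(2,3)$ gives $\alpha_2=\alpha_3$, hence all equal — \emph{provided} the relevant combinations are sole. So again the crux is showing solehood, i.e. that $u_1+u_3$ is not accidentally parallel to $u_2-u_3$, $u_2+u_3$, or $\pm u_k$; this is where I would use the counter-clockwise ordering to control the arguments (angles) of the combination vectors and exclude these coincidences, falling back on the reverse hypothesis $\{\text{Sole vectors from }S_2\}\prec S_1$ whenever a vector in $S_1$ fails to be sole but its $S_2$-counterpart is.

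\textbf{Main obstacle.} The hard part will be the bookkeeping of \emph{which} combination vectors are sole: ruling out all the accidental parallelisms (between a plus-combination and a minus-combination from a different pair, or between a combination and a single $\pm u_k$) and showing that in every configuration enough sole vectors survive — on one side or the other — to chain the equalities $\alpha_1=\alpha_2=\alpha_3$ together. The geometric input (angular ordering, linear independence) should make each individual exclusion routine, but assembling them into an exhaustive case analysis without gaps is the delicate part.
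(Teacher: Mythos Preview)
Your central step---``if $u_i+u_j$ is a sole vector of $S_1$, then it must be parallel to $\pm(\alpha_i u_i \pm \alpha_j u_j)$, hence $\alpha_i=\alpha_j$''---is where the argument breaks. The hypothesis $\{\text{sole vectors of }S_1\}\prec S_2$ only tells you that $u_i+u_j$ is parallel to \emph{some} nonzero vector of $S_2$; it does not force that vector to involve the same pair of indices. Concretely, $u_1+u_2$ lies in the open angular sector between $u_1$ and $u_2$, while $\alpha_1 u_1+\alpha_3 u_3$ lies in the (larger) open sector between $u_1$ and $u_3$; these sectors overlap, so for suitable $\alpha_1,\alpha_3$ one genuinely has $u_1+u_2\parallel \alpha_1 u_1+\alpha_3 u_3$. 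Likewise $u_1+u_2$ can be parallel to $\alpha_2 u_2-\alpha_3 u_3$. Neither the counter-clockwise ordering nor linear independence rules this out (in $\R^2$ any two of the $u_i$ already span everything, so ``linear independence'' carries no extra information about $u_3$). Your orthogonal subcase inherits the same gap: $u_1+u_3$ need not match $\alpha_1 u_1+\alpha_3 u_3$.

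The paper's proof confronts exactly these cross-pair parallelisms. It does \emph{not} try to show that $u_i+u_j$ matches its own scaled version; instead it lists, for each sole vector, the two or three elements of $S_2$ it \emph{could} be parallel to (e.g.\ $u_1+u_2\parallel \alpha_1 u_1+\alpha_3 u_3$ or $\alpha_3 u_3-\alpha_2 u_2$), and then encodes every such parallelism as a linear relation $\vec a\cdot v=0$ where $\vec a=(u_1\times u_2,\,u_1\times u_3,\,u_2\times u_3)$. Collecting the constraints from both directions ($S_1\to S_2$ and $S_2\to S_1$) produces six vectors $v_1,\ldots,v_6$ that must all be orthogonal to the nonzero $\vec a$, hence span a plane; computing $3\times 3$ determinants among them forces $\alpha_2=1$ or $\alpha_3=1$, contradicting the standing assumption. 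So the missing idea in your plan is this linearization via cross products and the rank/determinant argument---without it, the case analysis of ``which element of $S_2$ is the partner'' does not close.
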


\begin{proof}

We may normalize $\alpha_1=1$.  For $1\leq i, j \leq 3$, denote 
$$
a_{ij}=u_i\times u_j={\rm det}[u_i,u_j].
$$  
Set $\vec{a}=(a_{12}, a_{13}, a_{23}) \in \R^3$. 

We prove by contradiction by assuming  that $\alpha_2$ and $\alpha_3$ are not both $1$. 
This is rather a lengthy proof and we divide it into steps in order to keep track with the key points easily.  
The directions $\{\pm  u_j\}_{j=1}^3$ divide $\R^2$ into six regions named I--VI as in Figure \ref{fig:case1} below.

\medskip

 \noindent {\bf   Part I: Non-orthogonal case.}  We first assume that $u_1, u_2, u_3$ are mutually non-orthogonal. Then it is easy to see that  
\[
\left\{\pm (u_1+u_2),\  \pm (u_2+u_3), \pm (u_3-u_1)\right\}\subseteq \{\text{Sole vectors from $S_1$} \}
\]
and
\[
\left\{\pm (u_1+\alpha _2 u_2),\  \pm (\alpha _2 u_2+\alpha_3 u_3), \ \pm (\alpha_3 u_3-  u_1)\right\}\subseteq \{\text{Sole vectors from $S_2$} \}.
\]

\medskip

{\bf Step 1.}  Assume that $\alpha_2=1$ but $\alpha_3\not= 1$. Then we have that 

\[
\begin{cases}
u_2+u_{3}\parallel u_1+\alpha_3 u_3 \ \mathrm{or} \  u_1-  u_2,\\
u_3-u_1\parallel  u_1- u_2 \ \mathrm{or} \ \alpha_3 u_3-  u_2
\end{cases}
\]
and
\[
\begin{cases}
u_2+\alpha_3 u_3\parallel u_1+u_3  \ \mathrm{or} \  u_1-u_2,\\
\alpha_3 u_3 - u_1\parallel u_1-u_2 \ \mathrm{or} \  u_3-u_2
\end{cases}
\]
Since $u_2+u_3$, $u_3-u_1$, $u_2+\alpha_3 u_3$ and $\alpha_3 u_3 - u_1$ are mutually non-parallel, there are only two possibilities. 

\medskip

{\it Case 1.1.} None of these four vectors is parallel to $u_1-u_2$. Then
$$
\begin{cases}
u_2+u_{3}\parallel  u_1+\alpha_3 u_3\\
u_3-u_1\parallel \alpha_3 u_3- u_2\\
 u_2+\alpha_3 u_3\parallel u_1+u_3\\
\alpha_3 u_3 -  u_1\parallel u_3-u_2.
\end{cases}
$$
We use the fact that $u\times \hat u=0$ provided $u\parallel \hat u$ to  yield
$$
\vec{a}\cdot w_{k}=0  \quad \text{for all  $k={1,2,3,4}$}. 
$$
Here
$$
w_1=(-1,-1,\alpha_3), \ w_2=(1,-\alpha_3,1),\  w_3=(-1,-\alpha_3, 1),  \ w_4=(1,-1, \alpha_3).
$$
Therefore, the dimension of  $V=\text{span}\{w_1, w_2, w_3, w_4\}$ is  at most $2$.  Therefore  $\mathrm{det}|w_1,w_2,w_4|=0$, which leads to  $\alpha_3=1$. This is a contradiction.

\medskip

{\it Case 1.2.} One and only one of these four vectors is parallel to $u_1-u_2$.  
As the roles of $u_3$ and $\alpha_3 u_3$ are the same,
we only need to consider two situations . 
Either

$$
\begin{cases}
u_2+u_{3}\parallel u_1-u_2\\
u_3-u_1\parallel \alpha_3 u_3-  u_2\\
u_2+\alpha_3 u_3\parallel u_1+u_3\\
\alpha_3 u_3 - u_1\parallel u_3-u_2.
\end{cases}
\quad \mathrm{or} \quad \
\begin{cases}
u_2+u_{3}\parallel  u_1+\alpha_3 u_3\\
u_3-u_1\parallel  u_1-u_2\\
 u_2+\alpha_3 u_3\parallel u_1+u_3\\
\alpha_3 u_3 - u_1\parallel u_3-u_2.
\end{cases}
$$
Then we have either the dimension of $\text{span}\{\hat w_1, w_2, w_3, w_4\}$ is $2$ 
or  the dimension of $\text{span}\{w_1, \hat w_2, w_3, w_4\}$ is $2$. 
Here $\hat w_1=(-1,-1, 1)$ and $\hat w_2=(1,-1, 1)$.  Both cases lead to the same conclusion that $\alpha_3=1$. This is  a contradiction.

\medskip

{\bf Step 2.}  Either  $\alpha_2\not=\alpha_3=1$  or $\alpha_2=\alpha_3\not=1$.
This case can be transformed back to the previous case by suitable rotations, reflections and normalizations. 

\medskip

{\bf Step 3:} Now we consider the case $1\not=\alpha_2\not=\alpha_3\not=1$. Then we must have that  for $i,j\in \{1,2,3\}$
\[
\begin{cases}
u_i+u_j  \nparallel \alpha_i u_i+ \alpha_j u_j,\\
u_i-u_j  \nparallel \alpha_i u_i-\alpha_j u_j.
\end{cases}
\]
Accordingly, 
\begin{equation}
\begin{cases}
u_1+u_2  \parallel   u_1+ \alpha_3 u_3\  \mathrm{or} \ \alpha_3 u_3-\alpha_2 u_2,\\
u_2+u_3 \parallel   u_1+ \alpha_3 u_3\  \mathrm{or}  \ \alpha_2 u_2 -  u_1,\\
u_3-u_1 \parallel  \ \alpha_2 u_2 -   u_1\  \mathrm{or}  \  \alpha_3 u_3- \alpha_2 u_2.
\end{cases}
\end{equation}

Since $u_1+u_2$, $u_2+u_3$ and $u_3-u_1$ are mutually linearly independent,  we have only two scenarios.

\begin{equation}\label{step1}
\begin{cases}
u_1+u_2  \parallel  u_1+ \alpha_3 u_3\\
u_2+u_3  \parallel  \alpha_2 u_2 -  u_1,\\
u_3-u_1  \parallel   \alpha_3 u_3- \alpha_2 u_2.
\end{cases}
\quad  \mathrm {or}  \quad 
\begin{cases}
u_1+u_2  \parallel  \alpha_3u_3-\alpha_2u_2\\  
u_2+u_3  \parallel   u_1+ \alpha_3u_3,\\
u_3-u_1  \parallel   \alpha_2u_2 - u_1.
\end{cases}
\end{equation}

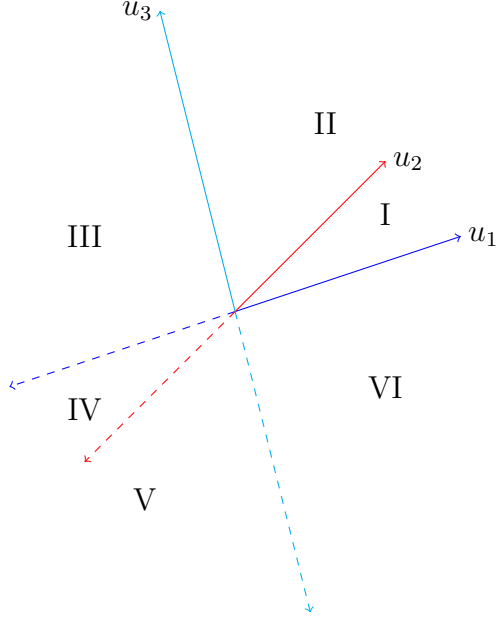
\begin{figure}[h]
\begin{center}
\begin{tikzpicture}

\draw[blue, ->] (0,0)--(3,1);
\draw[blue, dashed, ->] (0,0)--(-3,-1);
\draw (3.3,1) node {$u_1$};

\draw[red,->] (0,0)--(2,2);
\draw[red,dashed,->] (0,0)--(-2,-2);
\draw (2.3,2) node{$u_2$};

\draw[cyan,->] (0,0)--(-1,4);
\draw[cyan, dashed, ->] (0,0)--(1,-4);
\draw (-1.3,4) node{$u_3$};

\draw (2,1.3) node{I};
\draw (-2,-1.3) node{IV};
\draw (1.2,2.5) node{II};
\draw (-1.2,-2.5) node{V};
\draw (-2,1) node{III};
\draw (2,-1) node{VI};

\end{tikzpicture}
\caption{Vectors $\{u_j\}_{j=1}^3$ and six regions I--VI} \label{fig:case1}
\end{center}
\end{figure}

Similarly, there are two other cases to be considered for $u_1+ \alpha_2 u_2$, $\alpha_2 u_2 + \alpha_3 u_3$, $\alpha_3 u_3 - u_1$.

\begin{equation}\label{step2}
\begin{cases}
u_1+\alpha_2u_2  \parallel u_1+ u_3\\
\alpha_2u_2+\alpha_3u_3  \parallel  u_2 - u_1,\\
\alpha_3u_3-u_1  \parallel   u_3- u_2.
\end{cases}
\quad  \mathrm {or}  \quad 
\begin{cases}
u_1+\alpha_2u_2  \parallel  u_3-u_2\\  
\alpha_2u_2+\alpha_3u_3  \parallel  u_1+ u_3,\\
\alpha_3u_3- u_1  \parallel   u_2 - u_1.
\end{cases}
\end{equation}
In total, there are four cases to be studied.

\medskip
{\it Case 3.1.} Assume that
\begin{equation}\label{step2}
\begin{cases}
u_1+u_2  \parallel u_1+ \alpha_3u_3\\
u_2+u_3  \parallel  \alpha_2u_2 - u_1,\\
u_3-u_1  \parallel   \alpha_3u_3- \alpha_2u_2.
\end{cases}
\quad  \mathrm {and}  \quad 
\begin{cases}
u_1+\alpha_2u_2  \parallel u_1+ u_3\\
\alpha_2u_2+\alpha_3u_3  \parallel  u_2 - u_1,\\
\alpha_3u_3-u_1  \parallel   u_3- u_2.
\end{cases}
\end{equation}
 Considering cross product between parallel vectors,  we get that 
$$
\vec{a} \cdot v_i =0.
$$
for (here we write $\alpha=\alpha_2$ and $\beta=\alpha_3$)
$$
v_1=(-1,\beta, \beta), \ v_2=(1,1,-\alpha), \ v_3=(\alpha, -\beta, \alpha)
$$
and
$$
 v_4=(-\alpha, 1, \alpha), \  v_5=(\alpha, \beta, -\beta),  \ v_6=(1,-1,\beta).
$$
Clearly, the dimension of $\mathrm{span}\{v_1,v_2,v_3, v_4, v_5,v_6\}$ is 2. 
By noting that $v_2+v_3=(1+\alpha, 1-\beta, 0)$ and $v_5+v_6=(\alpha+1, \beta-1, 0)$,
we imply $v_2+v_3$ and $v_5+v_6$ are linearly dependent. 
Otherwise,  $\mathrm{span}\{v_1,v_2,v_3, v_4, v_5,v_6\}\subseteq \{x_3=0\}$,  which is impossible. 
Hence we obtain that $1-\beta =\beta-1$, that is, $\beta=1$. This is a contradiction.

\medskip

{\it Case 3.2.} We have that
\[
\begin{cases}
u_1+u_2  \parallel  u_1+ \alpha_3u_3\\
u_2+u_3  \parallel  \alpha_2u_2 -  u_1,\\
u_3-u_1  \parallel   \alpha_3u_3- \alpha_2u_2.
\end{cases}
\quad  \mathrm {and}  \quad \
\begin{cases}
 u_1+\alpha_2u_2  \parallel u_2- u_3,\\
\alpha_2u_2+\alpha_3u_3  \parallel u_1+ u_3,\\
\alpha_3u_3- u_1  \parallel u_2 - u_1.
\end{cases}
\]
Set
$$
\tilde v_4=(1,-1,-\alpha), \  \tilde v_5=(-\alpha, -\beta, \alpha),\  \tilde v_6=(-1, \beta, -\beta).
$$
Similarly, the rank of $\{v_1,v_2,v_3, \tilde v_4, \tilde v_5,\tilde v_6\}$ is 2.  Note that $v_2+v_3=(1+\alpha, 1-\beta, 0)$ and  $v_2+\tilde v_5=(1-\alpha, 1-\beta, 0)$. 
By the same argument as above, 
$v_2+v_3$ and $v_2+\tilde v_5$ are linearly dependent, which leads to $\beta=1$.
We again arrive at a contradiction.

\medskip

{\it Case 3.3.} We have that
\[\begin{cases}
u_1+u_2  \parallel  \alpha_3u_3-\alpha_2u_2\\  
u_2+u_3  \parallel  u_1+ \alpha_3u_3,\\
u_3-u_1  \parallel   \alpha_2u_2 - u_1.
\end{cases}
\quad  \mathrm {and}  \quad 
\begin{cases}
u_1+\alpha_2u_2  \parallel u_2- u_3,\\
\alpha_2u_2+\alpha_3u_3  \parallel u_1+ u_3,\\
\alpha_3u_3- u_1  \parallel u_2 - u_1.
\end{cases}
\]
Set
$$
\hat v_1=(-\alpha, \beta, \beta),\  \hat v_2=(-1, -1, \beta),  \ \hat v_3=(-\alpha, 1, -\alpha).
$$
Again,  the rank of $\{\hat v_1,\hat v_2,\hat v_3, \tilde v_4, \tilde v_5,\tilde v_6\}$ is 2.  
Note that $\tilde v_4+\tilde v_5=(1-\alpha, -1-\beta, 0)$ and  $\hat v_1-\hat v_2=(1-\alpha, \beta+1, 0)$. 
Similar to the above, $\tilde v_4+\tilde v_5$ and $\hat v_1-\hat v_2$ must be linearly dependent, which leads to  $\alpha=1$.  This is again a contradiction.

\medskip

Due to the symmetry, the remaining case is essentially the same as  Case 3.2. 
We omit the proof.

\medskip

\noindent {\bf Part II: Orthogonal Case.} Without loss of generality,  we assume the $u_1\perp u_3$. 
The other two situations ($u_1\perp u_2$ or $u_2\perp u_3$) can be converted into this case by suitable reflections and rotations.  For this case, 
\[
\left\{\pm (u_1+u_2),\  \pm (u_2+u_3)\right\}
\]
and
\[
\left\{\pm ( u_1+\alpha _2u_2),\  \pm (\alpha _2u_2+\alpha_3u_3)\right\}
\]
are still sole vectors of $S_1$ and $S_2$, respectively.  Also, it is important to note that, by definitions,
$$
\pm u_1\pm u_3\notin S_1   \quad \mathrm{and} \quad \pm u_1\pm \alpha_3 u_3\notin S_2.
$$
We consider two cases.
\medskip

{\it Case II.1.}  Assume that $1=\alpha_2\not=\alpha_3$. Then $\alpha_1 u_1+\alpha_2 u_2=u_1+u_2$. By the assumption
\begin{equation}\label{oneway}
\begin{cases}
u_2+u_3\parallel  u_1-u_2\\
u_2+\alpha_3 u_3\parallel u_1-u_2.
\end{cases}
\end{equation}
This leads to $u_2+ u_3\parallel u_2+\alpha_3 u_3$, which is absurd. 

\medskip

{\it Case II. 2.} Assume that  $\alpha_2\not=1$.  By the assumption,  we must that 
 $$
\begin{cases}
u_1+u_2\parallel  \alpha_3 u_3-\alpha _2 u_2\\
\alpha_3 u_3+\alpha _2 u_2\parallel u_2-u_1.
\end{cases}
$$
This is equivalent to 
$$
\begin{cases}
u_1+u_2\parallel  u_3-{\alpha_2\over \alpha_3} u_2\\
u_2-u_1\parallel u_3+{\alpha_2\over \alpha_3} u_2.
\end{cases}
$$
Then $-ra_{12}+a_{13}+a_{23}=-ra_{12}-a_{13}+a_{23}=0$ for $r={\alpha_2\over \alpha_3}$. This implies that $a_{13}=0$, i.e., $u_1\parallel u_3$, which is again absurd. The proof is complete.
\end{proof}

Combining Remark \ref{rem:sole} and the above Lemma \ref{lem:2d-2},  we obtain that there exists $c\in \mathbb{Q}$ such that for $j=1,2,3$,
\begin{equation}\label{k1j-k2j}
k_{2j}=ck_{1j}.
\end{equation}
Without loss of generality,  we set $c=1$. 
Note however that Lemma \ref{lem:2d-1} only gives us that $|\lam_{1j}| = |\lam_{2j}|$ for $1\leq j \leq 3$,
which is not yet enough to conclude Theorem \ref{thm:2d}.
To finish the proof,  we need one more relation between $\{\lambda_{1j}\}_{j=1}^3$ and $\{\lambda_{2j}\}_{j=1}^3$. 

 Since $\ol{H}_1=\ol{H}_2$, we get that
\begin{equation}\label{min-V}
\max_{\T^2} V_1=\overline H_1(0)=\overline H_2(0)=\max_{\T^2} V_2.
\end{equation} 
We use this relation to get the final piece of information.
Before doing so, we need some preparations.
\medskip
\begin{defn}
Given $r_1, r_2,r_3>0$ and $\alpha_1,  \alpha_2\in \mathbb{Q}$,  denote
$$
M(t)=\max_{\theta_1,\theta_2\in \mathbb{R}}\{r_1\cos \theta_1+r_2\cos \theta_2+r_3\cos (\alpha_1 \theta_1+\alpha_2 \theta_2+t)\} \quad \text{ for } t \in \R.
$$
Of course $M(t)$ depends on the parameters $r_1, r_2, r_3, \alpha_1, \alpha_2$, but we do not write down this dependence explicitly unless there is some confusion.
\end{defn}

It is easy to see that $\max_{\mathbb{R}}M=r_1+r_2+r_3$, and the maximum is attained when 
$$
t=2m\pi+2m_1\alpha_1 \pi+2m_2 \alpha_2 \pi
$$
for  $m, m_1, m_2\in \mathbb{Z}$.   
Note that the function $x \mapsto \cos x$ does not have non-global local maximum. 
We now show that this fact is also true for $M(t)$.  

\begin{lem}  \label{lem:M-min}
Every local maximum of $M$ is a global maximum.  
\end{lem}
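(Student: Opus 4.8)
The plan is to study the maximization defining $M(t)$ directly and to identify the critical-point structure. Write $f(\theta_1,\theta_2,t) = r_1\cos\theta_1 + r_2\cos\theta_2 + r_3\cos(\alpha_1\theta_1+\alpha_2\theta_2+t)$, and for fixed $t$ let $(\theta_1(t),\theta_2(t))$ be a maximizer, so that $M(t) = f(\theta_1(t),\theta_2(t),t)$. The stationarity conditions in $\theta_1,\theta_2$ read
\[
r_1\sin\theta_1 = -\alpha_1 r_3 \sin(\alpha_1\theta_1+\alpha_2\theta_2+t), \qquad r_2\sin\theta_2 = -\alpha_2 r_3 \sin(\alpha_1\theta_1+\alpha_2\theta_2+t).
\]
First I would observe that $M$ is Lipschitz (indeed locally semiconcave, being a max of smooth functions with uniformly bounded second derivatives in $t$), and that wherever $M$ is differentiable, the envelope theorem gives $M'(t) = -r_3\sin(\alpha_1\theta_1(t)+\alpha_2\theta_2(t)+t)$, independent of which maximizer is chosen. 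The key reduction is that $M(t)=r_1+r_2+r_3$, the global max, exactly when $\theta_1,\theta_2,\alpha_1\theta_1+\alpha_2\theta_2+t \in 2\pi\mathbb{Z}$ simultaneously; at any other critical value of $M$ the maximizing triple of cosines cannot all equal $1$.

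Next I would argue by contradiction: suppose $t_0$ is a local maximum of $M$ with $M(t_0) < r_1+r_2+r_3$. Since $M(t_0)$ is a strict local max, $M'$ changes sign (or $M'(t_0)=0$ from a genuine interior critical point), which forces $\sin(\alpha_1\theta_1(t_0)+\alpha_2\theta_2(t_0)+t_0)=0$, hence $\cos(\alpha_1\theta_1(t_0)+\alpha_2\theta_2(t_0)+t_0)=\pm 1$. Combined with the stationarity equations above, $\sin\theta_1(t_0)=\sin\theta_2(t_0)=0$, so $\cos\theta_1(t_0),\cos\theta_2(t_0)=\pm 1$ as well. Because we are at a maximizer of $f$ in $(\theta_1,\theta_2)$ and all three $r_i>0$, the only possibility consistent with maximality is $\cos\theta_1(t_0)=\cos\theta_2(t_0)=+1$ and $\cos(\cdot+t_0)=+1$ — any sign being $-1$ could be improved, contradicting that $(\theta_1(t_0),\theta_2(t_0))$ maximizes. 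But then $M(t_0)=r_1+r_2+r_3$, contradicting our assumption. Hence every local max is the global max.

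The main obstacle I anticipate is handling the non-smoothness of $M$ carefully: $t\mapsto M(t)$ need not be $C^1$ where the maximizer $(\theta_1(t),\theta_2(t))$ jumps, so the "$M'$ changes sign" step must be phrased via one-sided derivatives (Dini derivatives) or via semiconcavity. Concretely, at a local maximum $t_0$ the left Dini derivative is $\geq 0$ and the right is $\leq 0$; each one-sided derivative equals $-r_3\sin(\alpha_1\theta_1^{\pm}+\alpha_2\theta_2^{\pm}+t_0)$ for the corresponding one-sided limiting maximizer $(\theta_1^{\pm},\theta_2^{\pm})$, which is itself a maximizer of $f(\cdot,\cdot,t_0)$ by upper semicontinuity of the argmax. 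Running the stationarity argument of the previous paragraph on either one-sided maximizer yields the same conclusion. An alternative, perhaps cleaner route is to avoid derivatives of $M$ entirely: if $M(t_0)$ is a local max with value $<r_1+r_2+r_3$, look at the full critical point $(\theta_1(t_0),\theta_2(t_0),t_0)$ of $f$ as a function of all three variables (it is critical in $\theta_1,\theta_2$ by maximality and critical in $t$ because perturbing $t$ cannot increase $M$ to first order); the three stationarity equations then force all three sines to vanish and, by maximality, all three cosines to equal $+1$ — the same contradiction. I would present this second version as the main line and relegate the semiconcavity bookkeeping to a remark.
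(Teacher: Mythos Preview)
There is a genuine gap at the step ``any sign being $-1$ could be improved, contradicting that $(\theta_1(t_0),\theta_2(t_0))$ maximizes.'' Maximality in $(\theta_1,\theta_2)$ alone, even global maximality, does \emph{not} force all three cosines to be $+1$. For instance, take $\alpha_1=1$, $\alpha_2=0$, $t_0=\pi$, and $r_3>r_1$: then $f(\theta_1,\theta_2,\pi)=(r_1-r_3)\cos\theta_1+r_2\cos\theta_2$, whose unique global maximum is at $(\theta_1,\theta_2)=(\pi,0)$, giving cosines $(-1,+1,+1)$. This $t_0$ happens to be a local \emph{minimum} of $M$, so the lemma itself is not contradicted; but it shows that your sub-claim is false as a general statement, and your argument as written invokes only maximality in $(\theta_1,\theta_2)$ at this step. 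The same gap recurs in your ``alternative, cleaner route'': being a \emph{critical point} in $t$ together with a maximizer in $(\theta_1,\theta_2)$ is not enough to pin down the signs.

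The fix---which is exactly the paper's four-line proof---is to upgrade ``critical point'' to ``local maximum'' in all three variables simultaneously: for $t$ near $t_0$ and arbitrary $(\theta_1,\theta_2)$ one has
\[
f(\theta_1,\theta_2,t)\le M(t)\le M(t_0)=f(\theta_1(t_0),\theta_2(t_0),t_0),
\]
so $(\theta_1(t_0),\theta_2(t_0),t_0)$ is a local maximum of $f$ on $\R^3$. Under the linear change of variables $(\theta_1,\theta_2,t)\mapsto(\theta_1,\theta_2,s)$ with $s=\alpha_1\theta_1+\alpha_2\theta_2+t$, the function becomes the separable sum $r_1\cos\theta_1+r_2\cos\theta_2+r_3\cos s$, and a local maximum of a separable sum of cosines forces each cosine to equal $1$. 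Equivalently, and this is how the paper phrases it: if some cosine is not $1$, one perturbs $\theta_{1,0},\theta_{2,0},t_0$ \emph{jointly} (compensating in $t$ to keep the other two cosines fixed) so as to increase $f$; since $M(t)\ge f(\theta_1,\theta_2,t)$ for all $(\theta_1,\theta_2)$, this yields $M(t)>M(t_0)$ for $t$ arbitrarily close to $t_0$. No envelope theorem, Dini derivatives, or semiconcavity is needed.
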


\begin{proof} 
Suppose that $t_0$ is a local maximum of $M$. Assume that 
$$
M(t_0)=r_1\cos \theta_{1,0}+r_2\cos \theta_{2,0}+r_3 \cos (\alpha_1 \theta_{1,0}+\alpha_2 \theta_{2,0}+t_0).
$$
for some $\theta_{1,0}, \theta_{2,0} \in  \mathbb{R}$.  Then we must have that 
$$
\cos \theta_{1,0}=\cos \theta_{2,0}=\cos (\alpha_1 \theta_{1,0}+\alpha_2 \theta_{2,0}+t_0)=1.
$$
Otherwise, we can easily perturb $\theta_{1,0}, \theta_{2,0}$ and $t_0$ a bit to get a greater value of $M$ near $t_0$.  
\qedhere
\end{proof}

Now set
$$
l=\min\left\{|m\pi+m_1\alpha_1 \pi+m_2 \alpha_2 \pi| \,:\,  |m\pi+m_1\alpha_1 \pi+m_2 \alpha_2 \pi|>0,\  m, m_1, m_2\in \mathbb{Z}\right\}.
$$
Clearly, $l>0$ and, for all $t\in \R$,   
\begin{equation}\label{M-periodic}
M(t)=M(2l+t)=M(-t)=M(2l-t).
\end{equation}

\begin{prop} \label{prop:M}
The function $M$ is strictly decreasing on $[0, l]$, and is strictly increasing on $[l,2l]$.
\end{prop}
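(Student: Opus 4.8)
The plan is to combine three facts already available: $M$ is continuous, it is $2l$-periodic with $M(t)=M(2l-t)$ (equation \eqref{M-periodic}), and by Lemma \ref{lem:M-min} every local maximum of $M$ is a global maximum. First I would record that $M$ is actually $r_3$-Lipschitz: for fixed $\theta_1,\theta_2$ the quantity $r_1\cos\theta_1+r_2\cos\theta_2+r_3\cos(\alpha_1\theta_1+\alpha_2\theta_2+t)$ changes by at most $r_3|t-t'|$ as $t$ varies, so taking the supremum over $\theta_1,\theta_2$ gives $|M(t)-M(t')|\le r_3|t-t'|$; in particular $M$ attains its maximum and minimum on every closed subinterval. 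Next, the maximum value $r_1+r_2+r_3$ is attained exactly on $S:=\{2(m+m_1\alpha_1+m_2\alpha_2)\pi : m,m_1,m_2\in\Z\}$, and by the very definition of $l$ we have $0,2l\in S$ while $S\cap(0,2l)=\emptyset$. Hence $M(t)<r_1+r_2+r_3$ for all $t\in(0,2l)$, and by the symmetry $M(t)=M(2l-t)$ it suffices to prove that $M$ is strictly decreasing on $[0,l]$.

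The heart of the argument is: $M$ has \emph{no} local maximum in the open interval $(0,2l)$, since by Lemma \ref{lem:M-min} such a point would be a global maximum, contradicting $S\cap(0,2l)=\emptyset$. I would then pick $c\in[0,2l]$ at which $M$ attains its minimum over $[0,2l]$; since $M$ is nonconstant ($M(0)=r_1+r_2+r_3$ but $M<r_1+r_2+r_3$ somewhere), $c\in(0,2l)$. I claim $M$ is strictly decreasing on $[0,c]$. Suppose not: there are $0\le a<b\le c$ with $M(a)\le M(b)$. Let $d\in[a,c]$ be a point where $M|_{[a,c]}$ attains its maximum. Then $M(d)\ge M(b)\ge M(a)$ and $M(d)\ge M(b)\ge M(c)$. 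Running through the few ways equality can occur — for instance, if $M(d)=M(c)$ then, $c$ being a global minimizer, one forces $M(a)=M(b)$, and then $M$ is constant on a nondegenerate subinterval of $(0,2l)$, whose interior points are local maxima — one concludes in every case that $d$ is an interior point of $(0,2l)$ and a local maximum of $M$, a contradiction. Hence $M$ is strictly decreasing on $[0,c]$, and applying the same reasoning to $2l-c$ (also a global minimizer, by the symmetry of $M$) shows $M$ is strictly decreasing on $[0,2l-c]$, i.e.\ strictly increasing on $[c,2l]$.

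Finally, since $M$ is strictly decreasing on $[0,c]$ and strictly increasing on $[c,2l]$, the point $c$ is its unique minimizer on $[0,2l]$; but $M(2l-c)=M(c)$ forces $2l-c=c$, hence $c=l$. This is exactly the statement of Proposition \ref{prop:M}. I expect the only delicate point to be the bookkeeping in the ``no interior local maximum $\Rightarrow$ unimodal'' step: one must carefully dispose of the borderline cases in which the maximum of $M$ over $[a,c]$ is attained at an endpoint or along a nondegenerate flat stretch, using continuity of $M$ (so those maxima exist) together with the fact that $c$ is a global minimizer to rule each of them out. Everything else is formal.
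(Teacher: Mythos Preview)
Your proposal is correct and rests on the same two ingredients as the paper: Lemma \ref{lem:M-min} (hence no local maximum in $(0,2l)$) together with the reflection symmetry $M(t)=M(2l-t)$. The paper's organization is slightly more economical: from the symmetry it observes directly that $M$ can have no local \emph{minimum} in $(0,l)$ either (a local minimum at $t_0\in(0,l)$ and its mirror image at $2l-t_0$ would force a local maximum somewhere in between), and a continuous function with neither local maxima nor local minima in an open interval is strictly monotone there --- this bypasses your step of first locating the global minimizer $c$ and the attendant endpoint/flat-stretch bookkeeping, and yields $c=l$ for free.
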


\begin{proof}
Thanks to Lemma \ref{lem:M-min} and the choice of $l$,  $M$ has no local maximum in $(0, 2l)$.  

Besides, \eqref{M-periodic} gives that $M(t) = M(2l-t)$ for all $t \in (0,2l)$, and thus,   $M$ cannot have any  local minimum in $(0, l)$.  
The proof is complete.
\end{proof}

The following is an immediate implication from Proposition \ref{prop:M} and \eqref{M-periodic}.
\begin{cor}\label{key-fact}
For $t_1, t_2 \in \R$,
$$
M(t_1)=M(t_2)
$$
 if and only if  $t_1=t_2+2kl$ or $t_1=2kl-t_2$ for some $k\in  \mathbb{Z}$.
\end{cor}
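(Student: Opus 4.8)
The plan is to read the corollary off directly from the unimodality of $M$ on one fundamental domain together with its symmetries, so the argument will be quite short.

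First I would dispose of the ``if'' direction. If $t_1 = t_2 + 2kl$ for some $k \in \Z$, then $k$-fold iteration of the identity $M(t) = M(2l+t)$ from \eqref{M-periodic} gives $M(t_1) = M(t_2)$. If instead $t_1 = 2kl - t_2$, then combining $2l$-periodicity with the evenness $M(-t) = M(t)$ (also recorded in \eqref{M-periodic}) yields $M(t_1) = M(-t_2) = M(t_2)$.

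For the ``only if'' direction I would first reduce to the fundamental domain: choose integers $k_1, k_2$ so that $s_i := t_i - 2k_i l \in [0, 2l)$ for $i = 1, 2$; by $2l$-periodicity, $M(s_1) = M(s_2)$. Proposition \ref{prop:M} tells us $M$ is strictly decreasing on $[0,l]$ and strictly increasing on $[l, 2l]$, hence injective on each of these two subintervals, while \eqref{M-periodic} supplies the reflection identity $M(s) = M(2l - s)$. I would then distinguish cases according to the location of $s_1, s_2$: if both lie in $[0,l]$, or both in $[l,2l]$, injectivity forces $s_1 = s_2$; if they lie in different subintervals, say $s_1 \in [0,l]$ and $s_2 \in [l,2l]$, then $M(2l - s_1) = M(s_1) = M(s_2)$ with both $2l - s_1$ and $s_2$ in $[l,2l]$, so injectivity there forces $s_2 = 2l - s_1$. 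Substituting $s_i = t_i - 2k_i l$ back in gives $t_1 = t_2 + 2(k_1 - k_2)l$ in the first case and $t_1 = 2(1 + k_1 + k_2)l - t_2$ in the second, i.e., $t_1 = 2kl - t_2$ for an integer $k$; either way the desired alternative holds.

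This is entirely routine and I do not anticipate any genuine obstacle. The only point needing a little care is the overlap of the cases at the endpoints $s_i \in \{0, l\}$ of the subintervals, but this is harmless, since there the two possibilities $s_1 = s_2$ and $s_1 + s_2 = 2l$ both correspond to valid instances of the two alternatives in the statement.
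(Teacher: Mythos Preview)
Your argument is correct and is exactly the routine verification the paper has in mind: the authors simply declare the corollary ``an immediate implication from Proposition \ref{prop:M} and \eqref{M-periodic}'' without spelling out the case analysis you carry out. Your reduction to a fundamental domain and use of strict monotonicity on $[0,l]$ and $[l,2l]$ together with the reflection $M(s)=M(2l-s)$ is precisely what that sentence encodes.
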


We are now ready to prove the main result.

\begin{proof}[{\bf Proof of Theorem \ref{thm:2d}}]  
Thanks to \eqref{k1j-k2j} and the normalization that $c=1$, we have $k_{1j} = k_{2j}$ for all $1\leq j \leq 3$.
We now write $k_j=k_{1j}=k_{2j}$ for simplicity for all $1\leq j \leq 3$.
Then
$$
V_1(x)=a_1+ \sum_{j=1}^{3}\left(\lambda_{1j} e^{ i 2\pi k_j\cdot x}+\ol{\lambda_{1j}} e^{- i 2\pi  k_j\cdot x}\right).
$$
Since $k_1, k_2, k_3$ are mutually non-parallel,  by translation (i.e., $x\mapsto x+x_0$ for suitable $x_0$), we may assume that 
$$
V_2(x)=a_1+\sum_{j=1}^{2}\left(\lambda_{1j} e^{ i 2\pi k_j\cdot x}+\ol{\lambda_{1j}}  e^{- i 2\pi k_j\cdot x}\right)
+\tilde \lambda_{23} e^{ i 2\pi k_3\cdot x}+\ol{\tilde  \lambda_{23}}  e^{- i 2\pi k_3\cdot x}.
$$
Denote $\lambda_{1j}=r_j e^{ i \omega_j}$  for $1\leq j\leq 3$ and $\tilde \lambda_{23}=r_3e^{i \tilde \omega_3}$,
where  $r_j>0$ and $\omega_j, \tilde \omega_3 \in [0,2\pi)$ for $1\leq j\le 3$. Then
$$
V_1(x)= a_1+r_1\cos (2\pi k_1 \cdot x+\omega_1)  + r_2\cos (2\pi k_2 \cdot x+\omega_2) + r_3\cos (2\pi k_3 \cdot x+\omega_3) 
$$
and
$$
V_2(x)= a_1+r_1\cos (2\pi  k_1 \cdot x+\omega_1)  + r_2\cos (2\pi  k_2 \cdot x+\omega_2) + r_3\cos (2\pi  k_3 \cdot x+\tilde \omega_3).
$$
Again by translations, we may further assume that $\omega_1=\omega_2=0$.  
We write $k_3=\alpha_1 k_1+\alpha_2 k_2$ for some $\alpha_1, \alpha_2 \in \mathbb Q$.
Then it is clear from the definition of $M(\cdot)$ that
$$
\max_{\mathbb{T}^2} V_1=a_1+ M(\omega_3)   \quad \mathrm{and} \quad \max_{\mathbb{T}^2} V_2=a_1+M(\tilde \omega_3).
$$
In light of \eqref{min-V}, we get $M(\omega_3)=M(\tilde \omega_3)$.
 Assume that
$$
l=m\pi+m_1\alpha_1\pi +m_2\alpha_2\pi.
$$
for some $m, m_1, m_2\in \mathbb{Z}$.   Accordingly,  by Corollary \ref{key-fact},  we have two cases. 

\medskip

{\bf Case 1.}   $\omega_3=\tilde \omega_3+2kl$ for some $k\in  \mathbb{Z}$. Choose $x_0$ such that  
$$
\begin{cases}
k_1\cdot  x_0=km_1,\\
k_2\cdot x_0=km_2.
\end{cases}
$$
Then $k_3\cdot x_0=k\alpha _1m_1+k \alpha_2 m_2$ and 
$$
V_1(x)=V_2(x+x_0) \quad \text{ for all } x\in \T^2.
$$

\medskip

 {\bf Case 2.}  $\omega_3=2kl-\tilde \omega_3$ for some $k\in  \mathbb{Z}$. Choose $x_0$ such that  
$$
\begin{cases}
k_1\cdot  x_0=km_1,\\
k_2\cdot x_0=km_2.
\end{cases}
$$
Then  $k_3\cdot x_0=k\alpha _1m_1+k \alpha_2 m_2$ and 
$$
V_1(x)=V_2(-x-x_0) \quad \text{ for all } x\in \T^2.
$$

\end{proof}

\begin{rem}\label{rem:higher-coeff}
 It is natural to try using more the coefficients $\{a_j\}_{j \in \N}$ in the asymptotic expansion of $\ol{H}^\ep$ instead of \eqref{min-V} to prove the last step above.
It is, however, quite hard to implement this idea.  
Let us still mention it here.

Choose $(m_1,m_2,m_3)\in  \mathbb{N}^3$ such  that the  $\text{gcd}(m_1,m_2,m_3)=1$ and
$$
m_2k_{2}=m_1k_{1}+m_3k_{3}.
$$
Let $L=m_1+m_2+m_3$.  
It is easy to see that  $a_L$ is the first coefficient that provides us information about $\{\lambda_{1j}\}_{j=1}^3, \{\lambda_{2j}\}_{j=1}^3$
further than Lemma \ref{lem:2d-1}.   
For $r_j=|\lambda_{1j}|=|\lambda_{2j}|$ for $1\leq j \leq 3$, we have
$$
a_L=P(r_j, \ k_j,  \ Q: \ 1\leq j\leq 3)+J(k_1,k_2,k_3, Q)\text{Re}\left({\lambda_{11}^{m_1}} \left(\ol{\lambda_{12}}\right)^{m_2}\lambda_{13}^{m_3}\right).
$$
Here $P$ is a real valued function depending only on $\{r_j, \ k_j,  \ Q: \ 1\leq j\leq 3\}$ and $J$ a real valued function depending only on $\{k_1,k_2,k_3, Q\}$.  
It will be done if we can manage to show that $J(k_1,k_2,k_3, Q)$ is not zero for some  $Q\in \R^2$.  
However, it is not clear to us how to verify  this since the expression of $J$ is too complicated.  
\end{rem}

\section{Proof of Theorem \ref{thm:3d} and \ref{thm:3d-2}} \label{sec:3d}

We first provide the proof of  Theorem \ref{thm:3d}.

\begin{proof}[{\bf Proof of Theorem \ref{thm:3d}}]
We consider each case separately.

\medskip

(1) The sufficiency part follows immediately from Lemma \ref{lem:2d-1}.   Let us prove the converse.  
Since $\{k_{lj}\}_{j=1}^3$ is linearly independent, by suitable translations ($x\mapsto x+x_{0l}$), we may assume that
$$
V_1(x) =\sum_{j=1}^{3} r_j\cos ( 2\pi k_{1j}\cdot x)
$$
and for $c_j>0$,
$$
V_2(x) =\sum_{j=1}^{3} r_j\cos (c_j 2\pi k_{1j}\cdot x).
$$
Then the conclusion follows from Lemma \ref{decomposition} and  changing of variables.

\medskip

(2) Let us first prove the sufficiency part.  
Clearly,  $k_{12}+k_{13}$ and $k_{22}+k_{23}$ are sole vectors.  Since $\{k_{lj}\}_{j=1}^3$ is linearly independent,  due  to  Lemma \ref{lem:2d-1} and Remark \ref{rem:sole},  we must have 
$$
k_{12}+k_{13}\parallel k_{22}+k_{23}.
$$
Hence there exists $c\in \mathbb{Q}$ such that $k_{22}=ck_{12}$ and $k_{23}=ck_{13}$. 

We now prove the converse. By suitable translations, we may assume that 
$$
V_1(x)= r_1\cos (2\pi k_{11}\cdot x)+ r_2\cos (2\pi k_{12}\cdot x)+r_3\cos (2\pi k_{13}\cdot x)
$$
and for $c_1>0$,
$$
V_2(x)=r_1\cos (c_1 2\pi  k_{11}\cdot x)+ r_2\cos (c 2\pi  k_{12}\cdot x)+r_3\cos (c 2\pi k_{13}\cdot x).
$$
We then use Lemma \ref{decomposition} and  changing of variables to get the conclusion.

\medskip

(3)  The necessity part is obvious.  Let us prove  the sufficiency.  Due to  Lemma \ref{lem:2d-1},  there are two cases.

\medskip 

 {\bf Case 1.}  $\{k_{1j}\}_{j=1}^3$ is  linearly independent.  Due to symmetry,  we may assume that $k_{11}$ is not perpendicular to $k_{12}$ and $k_{13}$. Then similar to (2),  we have that 
$$
k_{11}+k_{12}\parallel k_{21}+k_{22}  \quad \mathrm{and}  \quad k_{11}+k_{13}\parallel k_{21}+k_{23}.
$$
Hence there exists $c\in \mathbb{Q}$ such that for $j=1,2,3$, 
$$
k_{2j}=ck_{1j}.
$$
Since $\{k_{1j}\}_{j=1}^3$ is linearly independent, it is easy to see that we can find $x_0\in  \R^n$ such that
$$
V_1(x)=V_2\left({x\over c}+x_0\right) \quad \text{ for all } x\in \T^n.
$$

\medskip

{\bf Case 2.}   $\{k_{1j}\}_{j=1}^3$ is  linearly dependent.  The situation is essentially reduced to the $2$-dimensional case and 
the conclusion follows from Theorem \ref{thm:2d}. 

\end{proof}

\medskip 

Next,  let us prove Theorem \ref{thm:3d-2}.  

\begin{proof}[{\bf Proof of Theorem \ref{thm:3d-2}}]
We consider each situation separately.

\medskip

(1) follows immediately from Lemma  \ref{lem:2d-1}. 

\medskip

(2) The proof of part (i) is similar to (1) of Theorem \ref{thm:3d}, and is omitted.
Let us now consider part (ii). Since $k_{11}$ and $k_{12}$ are linearly independent and non-orthogonal, due  to  Lemma \ref{lem:2d-1} and Remark \ref{rem:sole},  we get that 
$$
k_{11}+k_{12}\parallel k_{21}+k_{22}.
$$
So there exists $c\in \mathbb{Q} \setminus \{0\}$ such that, for $j=1,2$. 
$$
k_{2j}=ck_{1j}.
$$
Accordingly, it is easy to see that we can  find $x_0$ such that 
$$
V_1(x)=V_2\left({x\over c}+x_0\right) \quad \text{ for all } x\in \T^n.
$$

\end{proof}

The following is a simple lemma which should be well known to experts. We leave its proof as an exercise to the interested readers.
 
\begin {lem}\label{decomposition} 
Let $n,n_1,n_2 \in \N$ be such that $n=n_1+n_2$.
For $x\in \R^n$, we write $x=(x_1,x_2,\ldots,x_n)=(x',x'') \in \R^{n_1} \times \R^{n_2}$,
where  $x'=(x_1,x_2,...,x_{n_1})$ and $x''=(x_{n_1+1},...,x_{n})$.
Similarly, for $p\in \R^n$, we write $p=(p',p'') \in \R^{n_1} \times \R^{n_2}$.

Let $W_j\in C(\Bbb T^{n_j})$ be a given potential energy and  $c_j \in \R \setminus \{0\}$ be a given constant for $j=1,2$. 
 Assume that $\ol{H}_1(p'), \ol{H}_2(p'') ,\ol{H}(p)$ are the effective Hamiltonians associated with the Hamiltonians $\frac{1}{2}|p'|^2 +W_1(x')$,
 $\frac{1}{2}|p''|^2 +W_2(x'')$, $\frac{1}{2}|p|^2 + W_1(c_1 x') + W_2(c_2 x'')$, respectively.
Then
$$
\overline H(p)=\overline H_1(p')+\overline H_2(p'') \quad \text{ for all } p=(p',p'') \in \R^{n_1} \times \R^{n_2}.
$$
In particular, $\ol{H}$ is independent of $c_1$ and $c_2$. 
\end{lem}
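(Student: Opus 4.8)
The plan is to combine two facts, each elementary in isolation: the effective Hamiltonian is invariant under rescaling the spatial variable, which removes the constants $c_1,c_2$ from the picture, and when the potential separates as a sum $W_1(x')+W_2(x'')$ the cell problem separates accordingly.

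\textit{Step 1 (scaling invariance).} First I would record that, for any periodic potential $W$ on $\R^{k}$ (with $W(\mu\,\cdot)$ again periodic, as in the statement), any $q\in\R^{k}$, and any $\mu\neq 0$, the effective Hamiltonian $\ol H_{W(\mu\cdot)}$ associated with $\frac12|q|^2+W(\mu x)$ equals $\ol H_{W}(q)$. Indeed, let $v$ be a periodic viscosity solution of $\frac12|q+Dv|^2+W(y)=\ol H_W(q)$ and set $w(x):=\mu^{-1}v(\mu x)$. Then $w$ is periodic with respect to the rescaled lattice, $Dw(x)=(Dv)(\mu x)$ in the a.e. sense, and the affine change of variables $y=\mu x$ preserves the viscosity-solution property, so $w$ solves $\frac12|q+Dw|^2+W(\mu x)=\ol H_W(q)$ on the corresponding torus. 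By the uniqueness of the additive constant in the cell problem, the effective Hamiltonian for the rescaled potential is again $\ol H_W(q)$. Applying this with $k=n_1$, $W=W_1$, $\mu=c_1$ and with $k=n_2$, $W=W_2$, $\mu=c_2$, we may therefore assume $c_1=c_2=1$; that is, it suffices to prove that, for arbitrary periodic $W_1,W_2$, the effective Hamiltonian $\ol H$ of $\frac12|p|^2+W_1(x')+W_2(x'')$ satisfies $\ol H(p)=\ol H_1(p')+\ol H_2(p'')$.

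\textit{Step 2 (separation of the cell problem).} Let $v_1\in C(\T^{n_1})$ solve $\frac12|p'+D_{x'}v_1|^2+W_1(x')=\ol H_1(p')$ and $v_2\in C(\T^{n_2})$ solve $\frac12|p''+D_{x''}v_2|^2+W_2(x'')=\ol H_2(p'')$, and put $v(x):=v_1(x')+v_2(x'')\in C(\T^n)$. I claim $v$ is a viscosity solution of $\frac12|p+Dv|^2+W_1(x')+W_2(x'')=\ol H_1(p')+\ol H_2(p'')$ in $\T^n$. For the subsolution test, suppose $\psi\in C^1$ and $v-\psi$ has a local maximum at $x_0=(x_0',x_0'')$. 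Freezing $x''=x_0''$, the map $x'\mapsto v_1(x')-[\psi(x',x_0'')-v_2(x_0'')]$ has a local maximum at $x_0'$, and the bracketed function is $C^1$ with gradient $(D\psi(x_0))'$; since $v_1$ is a subsolution, $\frac12|p'+(D\psi(x_0))'|^2+W_1(x_0')\le\ol H_1(p')$. Freezing $x'=x_0'$ gives likewise $\frac12|p''+(D\psi(x_0))''|^2+W_2(x_0'')\le\ol H_2(p'')$. Adding these and using $\frac12|p+D\psi(x_0)|^2=\frac12|p'+(D\psi(x_0))'|^2+\frac12|p''+(D\psi(x_0))''|^2$ yields the desired subsolution inequality; the supersolution test is identical with maxima replaced by minima. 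By uniqueness of the ergodic constant (as in \cite{LPV}), the constant in this equation equals $\ol H(p)$, so $\ol H(p)=\ol H_1(p')+\ol H_2(p'')$; since the right-hand side does not involve $c_1,c_2$, the last assertion of the lemma follows.

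\textit{Main obstacle.} Everything is standard once one is careful with the viscosity-solution formalism: the naive proof just adds the gradients of two cell-problem solutions, but $v_1,v_2$ are only Lipschitz, so the two points that genuinely need justification are that the affine change of variables in Step 1 preserves the viscosity property and the freeze-one-variable argument in Step 2 that reduces a test of $v$ to separate tests of $v_1$ and $v_2$. Neither is deep, which is why the paper leaves it as an exercise.
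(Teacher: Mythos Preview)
The paper does not give a proof of this lemma at all; it states that the result ``should be well known to experts'' and explicitly leaves it as an exercise. Your argument is correct and is exactly the standard one an expert would supply: the scaling invariance of the quadratic cell problem disposes of $c_1,c_2$, and for a separable potential the sum $v(x)=v_1(x')+v_2(x'')$ of the two lower-dimensional correctors is a corrector for the full problem, which you verify in the viscosity sense by the freeze-one-variable touching argument. The only cosmetic remark is that the logical order is really ``Step~2 applied to $\widetilde W_j:=W_j(c_j\,\cdot)$, then Step~1 on each factor,'' which is what your sentence ``it suffices to prove that, for arbitrary periodic $W_1,W_2$, \ldots'' encodes; and in Step~1 you are tacitly posing the cell problem on the torus for the rescaled lattice, which is the right interpretation of the lemma's hypothesis that $\ol H$ exists for real $c_j$ (in the paper's applications the $c_j$ are rational anyway).
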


\bibliographystyle{plain}

\end{document}